\newtheorem{thm}{Theorem}[section]
\newtheorem{cor}[thm]{Corollary}
\newtheorem{lem}[thm]{Lemma}
\newtheorem{prop}[thm]{Proposition}
\theoremstyle{definition}
\theoremstyle{definition}
\newtheorem{defn}[thm]{Definition}
\newenvironment{pf}{\par\noindent{\bf Proof.}\enspace\ignorespaces}{\qed\par\par}
\newcommand{\cU}{{\mathcal{U}}}
\newcommand{\cC}{{\mathcal{C}}}
\newcommand{\cP}{{\mathcal{P}}}
\newcommand{\cF}{{\mathcal{F}}}
\newcommand{\cG}{{\mathcal{G}}}
\newcommand{\hcF}{{\widehat{\cF}}}
\newcommand{\hcG}{{\widehat{\cG}}}
\DeclareMathOperator{\id}{id}
\DeclareMathOperator{\eq}{eq}
\DeclareMathOperator{\coeq}{coeq}
\DeclareMathOperator{\Sets}{\mathbf{Sets}}
\DeclareMathOperator{\Top}{\mathbf{Top}}
\DeclareMathOperator{\CH}{\mathbf{CHaus}}
\DeclareMathOperator{\bSets}{\beta \Sets}
\DeclareMathOperator{\cSets}{\mathbf{CSets}}
\DeclareMathOperator{\ncSets}{\mathbf{NCSets}}
\newcommand{\wbSets}{{\widehat{ \bSets}}}
\newcommand{\wCH}{{\widehat{\CH}}}
\DeclareMathOperator{\res}{\mathbf{Res}}
\DeclareMathOperator{\ex}{\mathbf{Ext}}
\DeclareMathOperator{\colim}{colim}
\newcommand{\BB}{B^{(2)}}
\begin{document}

	\title{Condensed Sets via free resolutions}

	\author[Damià Rodríguez Banús]{Damià Rodríguez Banús}
	\email{damia@abox.com}
	
	\author[Xavier Xarles]{Xavier Xarles}
	\address{Departament de Matem\`atiques\\Universitat Aut\`onoma de
		Barcelona\\08193 Bellaterra, Barcelona, Catalonia}
	\email{xarles@mat.uab.cat}

	\begin{abstract}
	    We show the equivalence of several constructions of the category of condensed sets by using free resolutions of compact Hausdorff spaces. We also give an elementary construction of the condensed set associated to any presheaf on compact Hausdorff spaces. 
	\end{abstract}

	\maketitle
	
	The theory of condensed sets, recently developed by Dustin Clausen	and Peter Scholze \cite{CS} (see also the related construction of Pyknotic objects by Clark Barwick and Peter Haine \cite{BH}), claims that a \textit{nice} category of topological spaces, including compact Hausdorff spaces and verifying some desirable properties, should be replaced by certain objects defined in a functor category, concretely a category of sheaves with respect to certain Grothendieck topology on profinite spaces. One of their first results is showing that in their definition one can either use compact Hausdorff spaces, profinite spaces, or even extremally disconnected topological spaces, and one gets equivalent categories. In this note we work out the details of a precise comparison between the category of condensed sets defined as certain presheaves on free compact Hausdorff spaces, i.e. $\beta$-sets, and the category of condensed sets defined as sheaves on compact Hausdorff spaces, which easily implies that the category of condensed sets can be constructed using any of the various subcategories of topological spaces cited previously. This idea is already indicated in the cited notes.

   Although we are already aware that the main result can be proved as application of a general theory of Grothendieck topologies, by using that the $\beta$-sets form a basis for the topology of finite jointly epimorphic families for the compact Hausdorff spaces (see, for example, Proposition B.6.6 of \cite{Lur}, or \cite{Stacks}, Tag 03A1), we think that an elementary proof of this result can be useful for the people approaching condensed sets. Moreover, this note can also be used as an elementary introduction to the notion of condensed sets, and includes also some useful results about $\beta$-sets.

    As a by-product of our result, we also get another way, maybe more elementary, to construct the condensed set associated to any presheaf on compact Hausdorff spaces. 
    
    \textbf{Acknowledgements.} We thank Francesc Bars, Natalia Castellana, Marc Masdeu, Enric Nart, Joaquim Roé and Eloi Torrents, as participants in the tropical seminar in the U.A.B., for comments and discussions about this topic. We thank Marc Masdeu also for some corrections on a first version of the paper.  We thank Florian Leptien for pointing out an error in a previous version of the paper, and specially for suggesting a proof of Proposition \ref{extensionverifiesstar}. 
    The second author is partially supported by grant PID2020-
116542GB-I00 from the Spanish Research Agency.

\section*{Preliminaries and notations}

We will consider an essentially small category $\Sets$ of sets such that, if $S$ is in $\Sets$, so it is the set of its parts $\cP(S)$. For example, if $\kappa$ is an uncountable strong limit cardinal, we can consider the category of sets with cardinality strictly smaller than $\kappa$ (see remark 1.3 in \cite{CS}). All the other categories, such as the categories of topological spaces, $\Top$, and compact Hausdorff spaces, $\CH$, will be considered with underlying sets in $\Sets$. 

Given a topological space $X$, we will denote by $|X|$ the underlying set, and equally $|f|:|X|\to |Y|$ for any $f:X\to Y$ continuous map. We will also consider $j_X:|X|\to X$ the (continuous) map given by the identity as sets. 

Recall that the forgetful functor $|\ |:\CH\to \Sets$ commutes with fibre products and with coproducts. Therefore, $|X \times_Z Y|=|X| \times_{|Z|} |Y|$ for any two morphisms $f:X\to Z$ and $g:Y\to Z$ in $\CH$, and, similarly, $|X\sqcup Y|=|X|\sqcup |Y|$. 

\section{$\beta$ Sets}

In this section, we introduce the category $\bSets$ as the category of free objects in the category $\CH$, and we show some of its properties. 

Given a set $S$ in $\Sets$, let $\beta S$ be the set of ultrafilters on $S$, with the natural topology that makes it a compact Hausdorff space (see \cite{compactum}). Every function $f:S\to T$ induces a continuous function $\beta f:\beta S\to \beta T$ using the usual application of functions to filters, so we have a functor $\beta:\Sets \to \CH$. We denote by $ \bSets $ the full subcategory of $\CH$ formed by the objects $\beta S$.

Given any set $S$ in $\Sets$, consider the map $\iota_X:S\to \beta S$ sending any element to the principal ultrafilter. Then $\iota_X$ is injective, and it has dense image. Moreover, for any map $f:S\to T$, the natural diagram 
$$
\xymatrix{
	S \ar@{^{(}->}[r]^-{\iota_S} \ar[d]_{f} & \beta S \ar[d]^{\beta f} \\
	T \ar@{^{(}->}[r]_-{\iota_T} & \beta T
}
$$
commutes. 

It is well known that for any $S$, the space $\beta S$ is the Stone-\v Cech compactification of $S$, considered as a topological space with the compact topology. Hence, for any compact Hausdorff space $X$ and any (continuous) map $f: S\to X$, there is a unique continuous map $\tilde{f}: \beta S\to X$ such that $\tilde{f}\circ \iota_S=f$. 

This map can be constructed in the following way: given a compact Hausdorff space $X$, consider the underlying set $|X|$ and the continuous map $j_X:|X|\to X$ which is the identity as a map between sets, where $|X|$ is considered with the discrete topology. Then there is a continuous map $\xi_X:\beta|X|\to X$ given by sending any ultrafilter of $X$ to its limit in $X$, and such that $\xi_X\circ \iota_X=j_X$. Then $\tilde{f}=\xi_X\circ \beta f$.

This map is used to show that $\beta$ is the left adjoin of the forgetful functor $|\ |:\CH\to \Sets$. Hence, $\beta S$ is the free object associated to a set $S$, and therefore the category $\bSets$ is the category of free compact Hausdorff spaces. 

Recall some of the basic properties of these maps. 

\begin{lem}\label{Bxiandiota} Let $f:X\to Y$ a morphism in $\CH$. Then 
	$f\circ \xi_X=\xi_Y\circ \beta f$ and 
	$\beta f \circ \iota_X=\iota_Y\circ |f|$. 
\end{lem}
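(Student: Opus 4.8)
The plan is to treat the two identities in turn, observing that the second is essentially already recorded and that it feeds into the proof of the first. Throughout, $\beta f$ denotes $\beta|f|\colon \beta|X|\to\beta|Y|$, so that both $f\circ\xi_X$ and $\xi_Y\circ\beta f$ are continuous maps $\beta|X|\to Y$.

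First I would dispose of the identity $\beta f\circ\iota_X=\iota_Y\circ|f|$. This is nothing more than the naturality square for $\iota$ displayed above, specialized to the set map $|f|\colon|X|\to|Y|$. Indeed, taking $S=|X|$, $T=|Y|$ and the map $|f|$ in that square yields exactly $\beta|f|\circ\iota_{|X|}=\iota_{|Y|}\circ|f|$, which is the claim once we read $\iota_X=\iota_{|X|}$ and $\iota_Y=\iota_{|Y|}$.

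For the harder identity $f\circ\xi_X=\xi_Y\circ\beta f$, the key observation is that $Y$ is Hausdorff and that $\iota_X(|X|)$ is dense in $\beta|X|$, so it suffices to check that the two (continuous) maps agree after precomposition with $\iota_X$, i.e. on the dense subset $\iota_X(|X|)$. Precomposing the left-hand side gives $f\circ\xi_X\circ\iota_X=f\circ j_X$, using $\xi_X\circ\iota_X=j_X$. Precomposing the right-hand side and invoking the first identity gives $\xi_Y\circ\beta f\circ\iota_X=\xi_Y\circ\iota_Y\circ|f|=j_Y\circ|f|$, using $\xi_Y\circ\iota_Y=j_Y$. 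It then remains to verify $f\circ j_X=j_Y\circ|f|$ as continuous maps from the discrete set $|X|$ into $Y$; but since $j_X$ and $j_Y$ are the identity on underlying sets and $|f|$ is $f$ at the level of sets, both composites send $x\in|X|$ to $f(x)\in Y$, so they coincide. Density together with the Hausdorff property then forces $f\circ\xi_X=\xi_Y\circ\beta f$.

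The only real content lies in this last step, and the main obstacle is simply ensuring the applicability of the uniqueness principle that two continuous maps into a Hausdorff space which agree on a dense subset are equal. Alternatively, one could argue directly from the description of $\xi$ as ``take the limit of the ultrafilter'': here $\xi_X$ sends an ultrafilter $\cU$ on $|X|$ to its unique limit in $X$, while $\beta f$ sends $\cU$ to the pushforward ultrafilter $|f|_*\cU$ on $|Y|$, so the identity amounts to the standard fact that a continuous map $f$ satisfies $f(\lim\cU)=\lim|f|_*\cU$, which is precisely continuity phrased via convergence of ultrafilters in compact Hausdorff spaces. Either route concludes the proof.
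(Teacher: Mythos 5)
Your proof is correct and follows essentially the same route as the paper: the second identity is the naturality of $\iota$, and the first is obtained by checking agreement after precomposition with $\iota_X$ and then invoking uniqueness of continuous extensions from the dense image of $\iota_X$ into a Hausdorff space. The paper packages that last uniqueness step as the universal property of the Stone--\v Cech compactification, which is the same argument in different words.
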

\begin{proof}
    First, the fact that $\beta f \circ \iota_X=\iota_Y\circ |f|$ is directly deduced from the definition of $\beta f$. On the other hand, consider the following diagram
    $$
    \xymatrix{
        |X| \ar@{^{(}->}[r]^-{\iota_X} \ar[d]_{|f|} & \beta ( |X| ) \ar[r]^-{{\xi_X}} \ar[d]_-{{\beta f}} & X \ar[d]^-{f} \\
        |Y| \ar@{^{(}->}[r]^-{\iota_Y} & \beta ( |Y| ) \ar[r]^-{{\xi_Y}} & Y 
    }
    $$
    the left square of which commutes. Then, we have the following equalities of maps in $\mathbf{Sets}$:
    $$
    (f \circ \xi_X) \circ i_X = f, \quad (\xi_Y \circ \beta f) \circ i_X = f,
    $$
    given that $\xi \circ i = id$ i $\beta f \circ i_X = i_Y \circ |f|$. Hence, the diagram
    $$
    \xymatrix{
    |X| \ar@{^{(}->}[r]^-{i_X} \ar[dr]_-{f} & \beta ( |X| ) \ar@<-1ex>[d] \ar@<1ex>[d] \\
    	& Y 
    }
    $$
    commutes and, by the universal property of the Stone-\v Cech compactification, we have that $f \circ \xi_X = \xi_Y \circ \beta f$.
\end{proof}

The following lemma is quite useful for showing that a map from a $\beta$-set to a compact Hausdorff space is an epimorphism, and it can be deduced from the facts that $\iota_S:S\to \beta S$ has dense image, and the well-know result that the epimorphisms in the category of compact and Hausdorff spaces are the continuous maps with dense image. Nevertheless, we have decided to include a direct proof of this lemma.

\begin{lem}\label{epibetaCH}
	Let $f: \beta S \to X$ be any map from a $\beta$-set $\beta S$ to a compact Hausdorff space. Then $f$ is an epimorphism if and only if $f \circ \iota_S$ has dense image. 
\end{lem}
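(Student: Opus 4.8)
The plan is to reduce the claim to the standard fact that the epimorphisms in $\CH$ are exactly the surjective maps, and then to translate the condition that $f\circ\iota_S$ has dense image into surjectivity of $f$. Thus the proof splits into two equivalences, both anchored on the compactness and Hausdorffness of the target $X$.

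First I would handle the density condition. Since $\iota_S$ has dense image in $\beta S$ (noted in the preliminaries) and $f$ is continuous, the set $f(\iota_S(S))$ is dense in the image $f(\beta S)$. Because $\beta S$ is compact and $f$ is continuous, $f(\beta S)$ is compact, hence closed in $X$; consequently the closure taken in $X$ of $f(\iota_S(S))$ is precisely $f(\beta S)$. Therefore $f\circ\iota_S$ has dense image in $X$ if and only if $f(\beta S)=X$, that is, if and only if $f$ is surjective.

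It then remains to identify surjectivity with being an epimorphism in $\CH$. That every surjection is an epimorphism is immediate. For the converse I would argue by contraposition: if $f$ is not surjective, choose $x_0\in X\setminus f(\beta S)$; as $X$ is compact Hausdorff, hence normal, Urysohn's lemma yields a continuous map $g\colon X\to[0,1]$ vanishing on the closed set $f(\beta S)$ with $g(x_0)=1$. Then $g$ and the constant map $0$ are distinct morphisms $X\to[0,1]$ in $\CH$ whose precompositions with $f$ both equal the zero map $\beta S\to[0,1]$, so $f$ is not an epimorphism. Chaining the two equivalences gives the lemma.

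The only non-formal ingredient, and hence the step I expect to be the main obstacle, is this last direction (epimorphism $\Rightarrow$ surjective): it is exactly where the compact Hausdorff hypothesis is used, through normality and the existence of enough real-valued functions to separate a point from a disjoint closed set. The remaining steps—manipulating dense images and observing that surjections are epimorphisms—are routine.
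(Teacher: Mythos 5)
Your proof is correct and follows essentially the same route as the paper's: both reduce the lemma to surjectivity of $f$, using that $f(\beta S)$ is compact, hence closed, so that density of $f(\iota_S(S))$ in $X$ is equivalent to $f(\beta S)=X$. The only difference is that you explicitly establish, via Urysohn's lemma, that epimorphisms in $\CH$ are surjective, a standard fact the paper's proof uses implicitly when it picks a preimage $F\in\beta S$ of an arbitrary $x\in X$.
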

\begin{proof}
         Suppose first that $f$ is an epimorphism. Consider $x \in X$ and $N$ a neighborhood of $x$ such that $x \in U \subseteq N \subseteq X$, where $U$ is an open set. Consider the open set $V := f^{-1}(U) \subseteq \beta S$ and $F \in \beta S$ such that $f(F)=x$. Given that $i_S (S)$ is dense in $\beta S$ and $V \neq \emptyset$ because $F \in V$, there is $z \in S$ with $i_S (z) \in V$. Thus, $f(i_S (z)) \in N \cap f(i_S (S)) \neq \emptyset$ and $x \in \overline{f(i_S (S))}$. Hence, $\overline{f(i_S (S))} = X$.
         
        Assume now that $f \circ i_S$ has dense image. Firstly,
    	$$
    	f(i_S (S)) \subseteq f(\beta S) \subseteq X.
    	$$
    	From the fact that $f(\beta S)$ is compact because $f$ is a continuous map, and it is a closed set because $X$ is Hausdorff, we deduce that
    	$$
    	X = \overline{f(i_S (S))} \subseteq f(\beta S) \subseteq X.
    	$$
    	Consequently, $f$ is surjective and so it is an epimorphism.
\end{proof}

\begin{prop}\label{BSetsCat}
	The category $ \bSets$ has initial and final objects and finite coproducts. The functor $\beta$ preserves them. Moreover, the coproducts in $ \bSets$ are the coproducts in $\CH$ (or in $\Top$), and hence the underlying sets of the coproducts are the coproducts of the underlying sets.
\end{prop}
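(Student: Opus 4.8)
The plan is to exploit that $\beta:\Sets\to\CH$ is a left adjoint, as established above, and hence preserves all colimits that exist in $\Sets$. Finite coproducts and the initial object (the empty coproduct) exist in $\Sets$, so their images under $\beta$ realize the corresponding colimits in $\CH$; this will simultaneously give existence of these colimits in $\bSets$ and their preservation by $\beta$. The terminal object behaves differently, since a left adjoint need not preserve limits, so I would handle it by a direct computation.

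For the initial object, note that $\emptyset$ admits no ultrafilter, so $\beta(\emptyset)=\emptyset$ is the empty space, which is initial in $\CH$ and, being an object of the full subcategory, initial in $\bSets$; as $\beta$ preserves the empty coproduct this also records preservation of the initial object. For finite coproducts the central point is that, since $\beta$ preserves coproducts, there is a canonical isomorphism $\beta(S\sqcup T)\cong\beta S\sqcup\beta T$ with the right-hand coproduct formed in $\CH$. Thus the $\CH$-coproduct of two $\beta$-sets is again a $\beta$-set, so $\bSets$ is closed under finite coproducts computed in $\CH$; and because $\bSets$ is full in $\CH$, the universal property restricts, making $\beta(S\sqcup T)$ the coproduct in $\bSets$ as well. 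The same object is the coproduct in $\Top$, since a finite disjoint union of compact Hausdorff spaces is compact Hausdorff, so the inclusion $\CH\hookrightarrow\Top$ identifies the two universal objects. The claim on underlying sets then follows at once from the preliminaries, where $|\ |$ is recalled to commute with coproducts, giving $|\beta S\sqcup\beta T|=|\beta S|\sqcup|\beta T|$.

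It remains to treat the terminal object, which I would do by hand: the one-point set $\{*\}$ carries a single ultrafilter, so $\beta(\{*\})=\{*\}$ is the one-point space, the terminal object of $\CH$, and hence terminal in the full subcategory $\bSets$; this also exhibits $\beta$ as preserving the terminal object. I expect the only genuine obstacle to be the closure statement for coproducts, namely that the $\CH$-coproduct of two free compact Hausdorff spaces is again free. This is precisely where preservation of colimits by the left adjoint $\beta$ is essential, identifying $\beta S\sqcup\beta T$ with $\beta(S\sqcup T)$ and thereby keeping us inside $\bSets$.
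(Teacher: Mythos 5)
Your proof is correct, but for the key step — finite coproducts — you take a genuinely different route from the paper. The paper computes the coproduct via Stone duality: it identifies $\beta X$ with the spectrum of the Boolean ring $\cP(X)$, uses $\cP(X\sqcup Y)\cong \cP(X)\times\cP(Y)$, and lets the anti-equivalence convert that product into a coproduct of Stone spaces, which is then observed to be the topological disjoint union. You instead invoke the adjunction $\beta\dashv|\ |$ already established in the preliminaries: a left adjoint preserves colimits, so $\beta(S\sqcup T)$ is the coproduct of $\beta S$ and $\beta T$ in $\CH$, and the identification with the $\Top$-coproduct follows because a finite disjoint union of compact Hausdorff spaces is compact Hausdorff. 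Your argument is shorter and stays entirely inside machinery the paper has already set up, and you correctly supply the two points the abstract argument does not give for free: the fullness of $\bSets$ in $\CH$ (so the universal property of the $\CH$-coproduct restricts to the subcategory once the coproduct is seen to be a $\beta$-set), and the terminal object, which must be checked by hand since left adjoints need not preserve limits — your direct computation $\beta\{*\}=\{*\}$ matches the paper's. What the paper's Stone-duality route buys is a concrete description of $\beta(X\sqcup Y)$ independent of the adjunction, and it sets up the duality language reused later in Lemma \ref{isobeta}; what your route buys is economy and the observation that initial object and coproduct preservation are instances of one general fact. Both are complete proofs of the statement, including the final claim about underlying sets, which each deduces from the fact recorded in the preliminaries that $|\ |$ commutes with coproducts.
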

\begin{proof}
    Firstly, the Stone-\v Cech compactification of the empty set is the empty set itself. Hence, for every $\beta$-set $\beta S$ there is a unique morphism $\emptyset \to \beta S$, thus $\bSets$ has initial object. Secondly, the Stone-\v Cech compactification of $\lbrace \ast \rbrace$ is $\beta \left\{ \ast \right\} = \left\{ \ast \right\}$, thus for every $\beta$-set $\beta S$ there is a unique morphism $\beta S \to \beta \left\{ \ast \right\}$, thus $\bSets$ has final object. 
    
    On the other hand, consider $\beta X$ and $\beta Y$ two  $\beta$-sets. Let $S: \mathbf{BooRng} \to \mathbf{Stone}^{\textrm{op}}$ be the functor given by the Stone Duality, assigning to each boolean ring $R$ the space $\textrm{Spec}(R)$ equipped with the Zariski topology. It is easy to check that $\beta X \cong S(\mathcal{P}(X))$ as topological spaces. As a result, $\beta (X \sqcup Y) \cong S(\mathcal{P}(X \sqcup Y))$. Finally, from the fact that $\mathcal{P}(X \sqcup Y) \cong \mathcal{P}(X) \times \mathcal{P}(Y)$ and $S$ is an (anti)equivalence (therefore preserves limits), we deduce that $S(\mathcal{P}(X \sqcup Y))$ is the coproduct of $S(\mathcal{P}(X))$ and $S(\mathcal{P}(Y))$. In particular, $\beta (X \sqcup Y)$ is the coproduct of $\beta X$ and $\beta Y$ and coincides with the coproduct in $\mathbf{Top}$ (i.e. $\beta (X \sqcup Y) \cong \beta X \sqcup \beta Y$).  
\end{proof}

The following lemma shows that, although there are much more morphisms as $\bSets$ than as $\Sets$, two of them are isomorphic if and only if they are in bijection. 

\begin{lem}\label{isobeta}
    Let $S$ and $T$ be sets in $\Sets$. Suppose that $\beta S$ and $\beta T$ are homeomorphic as topological spaces. Then there is a bijection between $S$ and $T$. 
\end{lem}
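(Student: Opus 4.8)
The plan is to recover the cardinality of $S$ from the topological space $\beta S$ in a way that is manifestly a homeomorphism invariant. The key observation is that $\iota_S(S) \subseteq \beta S$ is exactly the set of \emph{isolated points} of $\beta S$: every principal ultrafilter is isolated (the singleton corresponding to a point of $S$ is open in $\beta S$), and conversely no non-principal ultrafilter can be isolated, since $\iota_S(S)$ is dense in $\beta S$ and a space cannot have an isolated point lying in the closure of its complement. Being isolated is a purely topological property preserved by any homeomorphism $h:\beta S \to \beta T$, so $h$ must carry the isolated points of $\beta S$ bijectively onto the isolated points of $\beta T$; that is, $h$ restricts to a bijection $\iota_S(S) \to \iota_T(T)$. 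Since $\iota_S$ and $\iota_T$ are injective, composing gives a bijection $S \to T$.

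The steps, in order, are as follows. First I would show that for each $s\in S$ the singleton $\{\iota_S(s)\}$ is open in $\beta S$, i.e. the principal ultrafilter at $s$ is an isolated point; this is immediate from the description of the topology on $\beta S$ via Stone duality (the clopen set associated to $\{s\}\in\mathcal{P}(S)$ is precisely $\{\iota_S(s)\}$). Second, I would show that every isolated point of $\beta S$ lies in $\iota_S(S)$: if $F\in\beta S$ is isolated then $\{F\}$ is open, but $\iota_S(S)$ is dense (as recalled in the preliminaries), so $\{F\}\cap\iota_S(S)\neq\emptyset$, forcing $F\in\iota_S(S)$. Together these two facts identify $\iota_S(S)$ with the set of isolated points of $\beta S$, a homeomorphism-invariant subset. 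Third, given a homeomorphism $h:\beta S\to\beta T$, it maps isolated points to isolated points in both directions, hence restricts to a bijection between $\iota_S(S)$ and $\iota_T(T)$. Finally, using the injectivity of $\iota_S$ and $\iota_T$ recorded earlier, the composite $\iota_T^{-1}\circ h\circ \iota_S \colon S\to T$ is a well-defined bijection.

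I do not expect a genuine obstacle here; the argument is clean once one spots that isolated points are the right invariant. The only point requiring a little care is the claim that principal ultrafilters are exactly the isolated points, which rests on the topology of $\beta S$: the forward direction (principal $\Rightarrow$ isolated) uses the clopen basis coming from subsets of $S$, and the reverse direction (isolated $\Rightarrow$ principal) uses density of $\iota_S(S)$. Both are elementary, so the main ``work'' is simply verifying that the notion of isolated point is the feature distinguishing the image of $S$ inside its Stone–\v Cech compactification.
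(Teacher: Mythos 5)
Your proof is correct, but it takes a different route from the paper's. The paper argues through Stone duality at the level of Boolean algebras: a homeomorphism $\beta S \to \beta T$ induces an isomorphism of the clopen-set algebras $\cP(S) \cong \cP(T)$, and any Boolean algebra isomorphism must match up atoms, which in a power set algebra are exactly the singletons $\{s\}$. You instead locate $S$ inside $\beta S$ topologically, as the set of isolated points: principal ultrafilters are isolated because the basic clopen set attached to $\{s\}$ is the singleton $\{\iota_S(s)\}$, and no other point can be isolated because $\iota_S(S)$ is dense. The two arguments are two faces of the same phenomenon (atoms of the clopen algebra correspond to minimal nonempty clopen sets, i.e.\ isolated points here), but yours is the more elementary and self-contained: it needs only the density of $\iota_S(S)$, which the paper records in Section 1, and the identification of the basic clopens, without invoking Stone duality as an equivalence of categories or the classification of atoms in $\cP(S)$. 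The paper's version is shorter given that it already uses Stone duality elsewhere (e.g.\ in Proposition \ref{BSetsCat}), so the machinery is free; your version would be preferable in a treatment that wants to avoid that machinery. Both are complete proofs; each step you list (singletons are clopen, isolated implies principal by density, homeomorphisms preserve isolated points, injectivity of $\iota_S$ and $\iota_T$) is justified.
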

\begin{pf}
    Using the Stone duality, an homemorphism between $\beta S$ and $\beta T$ gives an isomorphism as Boolean algebras between $\cP(S)$ and $\cP(T)$. But any such isomorphism gives a bijection between the respective atoms, and the atoms of $\cP(S)$ correspond to the subsets $\{s\}$ for $s \in S$. 
\end{pf}

Next, we show that the $\beta$-sets are projective elements in the category of compact Hausdorff spaces, and so that $\beta$-sets are indeed extremally disconnected. 

\begin{lem}\label{proj}\label{betaex}
	
	Let $f:\beta X\to Y$ be a continuos map where $Y$ is compact Hausdorff space, and let $g: Z\to Y$ and epimorphism of compact Hausdorff spaces. Then there exists $h: \beta X\to Z$ such that $g\circ h=f$. 
	
	In particular, if $g: Y \to \beta X$ is an epimorphism of compact Hausdorff spaces, there exists a continuous map $h:\beta X \to Y$ such that $f\circ s=\id_{\beta_X}$. 
\end{lem}

\begin{pf}
	Firstly, since $g$ is surjective, there exists a section map $s: |Y| \to Z$ as sets, with $g\circ s=\id_|Y|$. The composition $t:=s\circ f\circ \iota_X: X\to Z$ is a continuous map, because $X$ is a discrete space. Therefore, by the universal property of the Stone-\v Cech compactification, there is a continuous map $h: \beta X \to Z$ such that $ t = h \circ \iota_X$. Now, observe that
	$$
	g\circ h \circ \iota_X = g \circ (s\circ f\circ \iota_X) = f\circ \iota_X. 
	$$
	Therefore, we have that $g\circ h= f$ since there is a unique continuous map $f:\beta X\to Y$ extending a given map $X\to Y$.
\end{pf}

\begin{prop}\label{betafiberproduct}
Let  $T_1$, $T_2$ and $S$ be in $\Sets$. Let $f_1: \beta T_1 \to \beta S$ and $f_2: \beta T_2 \to \beta S$ be continuous maps. Consider the fibre product $\beta T_1 \times_{ \beta S}  \beta T_2$ in the category $\CH$, together with the projection maps $p_i: \beta T_1 \times_{\beta S} \beta T_2 \to \beta T_i$
for $i=1,2$.

Then the $\beta$-set $\beta|\beta T_1 \times_{\beta S} \beta T_2|$, together with the natural maps \[\pi_i: \beta|\beta T_1 \times_{\beta S} \beta T_2|\to \beta T_i \text{ for } i=1,2,\] given as $\pi_i=p_i\circ \xi_{\beta T_1 \times_{\beta S} \beta T_2}$, 
 verifies the following existence property: for any set $Q$ and a commutative diagram 
    $$
    \xymatrix{
    {\beta Q} \ar[r]^-{q_1} \ar[d]_-{q_2} & {\beta T_1} \ar[d]^-{f_1} \\
    {\beta T_2} \ar[r]_-{f_2} & {\beta S}
    }
    $$
there exists a morphism $\tau: \beta Q  \to \beta|\beta T_1 \times_{\beta S} \beta T_2|$ such that $\pi_i\circ \tau=q_i$ for $i=1$ and $2$. 
\end{prop}
\begin{proof}
    First, because of the universal property of the fibre product $\beta T_1 \times_{ \beta S}  \beta T_2$, there is a unique continuous map $h: \beta Q \to \beta T_1 \times_{ \beta S}  \beta T_2$ such that the following diagram commutes:
    $$
    \xymatrix{
    \beta Q \ar@/_/[ddr]_{q_2} \ar@/^/[drr]^{q_1} \ar[dr]^{h} \\
    & \beta T_1 \times_{ \beta S}  \beta T_2 \ar[r]_-{p_1} \ar[d]_-{p_2} & \beta T_1 \ar[d]^-{f_1} \\
    & \beta T_2 \ar[r]_-{f_2} & \beta S
    }
    $$
    Now, consider the following diagram in $\mathbf{Sets}$:
    $$
    \xymatrix{
    Q \ar@{^{(}->}[r]^-{\iota_Q} \ar[d]_-{{h \circ \iota_Q}} & {\beta Q} \ar[d]_-{{\beta(h \circ \iota_Q)}} \ar[r]^-{h} & {|\beta T_1 \times_{ \beta S}  \beta T_2|} \\
    {|\beta T_1 \times_{ \beta S}  \beta T_2|} \ar@{^{(}->}[r]^-{\iota} & {\beta | \beta T_1 \times_{ \beta S}  \beta T_2 |} \ar[ur]_-{\xi} 
    }
    $$
    The left square commutes, and we would like the right triangle to commute too. Actually, notice that, as maps in $\mathbf{Sets}$, we have the following equalities:
    $$
    \xi \circ \beta (h \circ \iota_Q) \circ i_Q = \xi \circ \iota \circ h \circ i_Q = id_{|\beta T_1 \times_{ \beta S}  \beta T_2|} \circ h \circ i_Q = h \circ i_Q
    $$
    Therefore, as continuous maps we have that $\xi \circ \beta (h \circ \iota_Q) \circ i_Q = h \circ i_Q$. As a result of the universal property of $\beta Q$, the equality $\xi \circ \beta (h \circ \iota_Q) = h$ is satisfied. Thus, the morphism $\tau=\beta (h \circ \iota_Q) : \beta Q \to \beta | \beta T_1 \times_{ \beta S}  \beta T_2 |$ satisfies the desired property.
\end{proof}

\section{Free resolution of compact Hausdorff topological spaces}

Given a compact Hausdorff space $X$, consider the underlying set $|X|$ and the continuous map $j_X:|X|\to X$, which is the identity as a map between sets, where the set $|X|$ is equipped with the discrete topology. Now, let $B(X):=\beta |X|$ be the $\beta$-set associated with $|X|$, and consider the continuous map $\xi_X:B(X)\to X$. The map $\xi_X$ is a surjection between compact Hausdorff spaces, hence it is a quotient map too. Moreover, it is an epimorphism in the category of compact Hausdorff spaces.

Now, consider the fibre product in the category of topological spaces $B(X)\times_XB(X)$ using the map $\xi_X$ for both factors. Consequently, we have two projection maps $p_i:B(X)\times_XB(X)\to B(X)$ for $i=1,2$ for which $X$ becomes (isomorphic to) its coequalizer. As $B(X)\times_X B(X)$ is not a $\beta$-set, we take $B^{(2)}(X):=B(B(X)\times_X B(X))$, and we get the maps $\pi_i:= p_i \circ \xi_{B(X)\times_X B(X)}$ for $i=1,2$.

\begin{lem}\label{freeresolution}
	The map $\xi_X:B(X)\to X$ gives an isomorphism 
	\[\coeq\left(
		B^{(2)}(X)\underset{\pi_2}{\overset{\pi_1}{\rightrightarrows}} B(X)
	\right) \cong X\]
We will call this the standard free resolution of $X$.
\end{lem}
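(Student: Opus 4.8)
The plan is to deduce the lemma from two inputs together with a single formal principle. The first input is the fact recorded just before the statement, that $\xi_X$ realizes $X$ as the coequalizer of the kernel pair $p_1,p_2\colon B(X)\times_X B(X)\rightrightarrows B(X)$. The second is the observation that the canonical map $e:=\xi_{B(X)\times_X B(X)}\colon \BB(X)\to B(X)\times_X B(X)$ is an epimorphism. The formal principle is that precomposing a parallel pair with an epimorphism leaves its coequalizer unchanged; applying it to $e$, and remembering that $\pi_i=p_i\circ e$ by construction, immediately reduces the pair $(\pi_1,\pi_2)$ to $(p_1,p_2)$.

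First I would (re)establish that $X\cong\coeq(p_1,p_2)$. Since $\xi_X$ is a surjection of compact Hausdorff spaces it is a quotient map, and the relation it induces on $B(X)$, namely $\{(a,b)\colon \xi_X(a)=\xi_X(b)\}$, is exactly the image of $(p_1,p_2)$ and is closed because $X$ is Hausdorff. A continuous $g\colon B(X)\to T$ satisfying $g\circ p_1=g\circ p_2$ is precisely one that is constant on the fibres of $\xi_X$, so it factors set-theoretically through $\xi_X$; continuity of the induced map $\bar g\colon X\to T$ follows because $\xi_X$ is a quotient map, and uniqueness of $\bar g$ follows from surjectivity of $\xi_X$. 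Hence $\xi_X$ enjoys the universal property of $\coeq(p_1,p_2)$, whatever the target category (the same verification works whether $T$ ranges over $\Top$ or $\CH$, the fibre product staying in $\CH$ since the forgetful functor preserves it).

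Next, writing $e=\xi_{B(X)\times_X B(X)}$, I would note that $e\circ\iota$ is the identity on underlying sets, hence surjective with dense image, so $e$ is an epimorphism by Lemma \ref{epibetaCH}. Because $\pi_i=p_i\circ e$, a morphism $g\colon B(X)\to T$ satisfies $g\circ\pi_1=g\circ\pi_2$ if and only if $g\circ p_1\circ e=g\circ p_2\circ e$, which, $e$ being epic, holds if and only if $g\circ p_1=g\circ p_2$. Thus the parallel pairs $(\pi_1,\pi_2)$ and $(p_1,p_2)$ admit exactly the same cocones and therefore the same coequalizer, giving $\coeq(\BB(X)\rightrightarrows B(X))\cong\coeq(p_1,p_2)\cong X$ with the isomorphism induced by $\xi_X$. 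The argument is essentially formal, and I expect no serious obstacle; the only step demanding genuine care is the identification of $X$ with the coequalizer of its kernel pair, which rests on $\xi_X$ being a quotient map with closed kernel relation, while the passage from $B(X)\times_X B(X)$ to $\BB(X)$ is a one-line diagram chase through the epimorphism $e$.
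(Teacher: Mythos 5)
Your proof is correct, and its overall skeleton is the same as the paper's: both reduce the statement to showing that $\xi_X$ coequalizes its kernel pair $p_1,p_2\colon B(X)\times_X B(X)\rightrightarrows B(X)$, using the fact that $\xi_{B(X)\times_X B(X)}$ is an epimorphism so that precomposing the parallel pair with it changes nothing. The difference lies in how the kernel-pair step is verified. The paper, given a cocone $c\colon B(X)\to Y$ with $c\circ p_1=c\circ p_2$, defines $k:=c\circ\iota_X$ and invokes the uniqueness clause of the Stone--\v Cech universal property to conclude $k\circ\xi_X=c$; for that appeal to be legitimate one must already know that $k$ is continuous as a map out of $X$ (not merely out of the discrete $|X|$), and the paper does not address this --- indeed it never visibly uses the hypothesis $c\circ p_1=c\circ p_2$. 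Your argument supplies exactly the missing ingredient: the hypothesis says $c$ is constant on the fibres of $\xi_X$ (because $|B(X)\times_X B(X)|=\{(a,b):\xi_X(a)=\xi_X(b)\}$, the forgetful functor preserving fibre products), so $c$ descends set-theoretically, and continuity of the descent follows from $\xi_X$ being a closed surjection of compact Hausdorff spaces, hence a quotient map. So your route is not structurally different, but it is the more careful of the two at the one point where care is actually needed; the remark about the kernel relation being closed is true but not required, since the quotient is already identified with the Hausdorff space $X$.
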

\begin{pf}
    It suffices to show that $\xi_X:B(X)\to X$ is a coequalizer of 
    $$
    B(X) \times_X B(X) \underset{p_2}{\overset{p_1}{\rightrightarrows}}  B(X),
    $$
    since the map $$\xi_{B(X)\times_X B(X)}: B^{(2)}(X)\to B(X)\times_XB(X)$$ is an epimorphism. 
    Firstly, by construction of the fibre product, we have that $\xi_X \circ p_1 = \xi_X \circ p_2$. Suppose there is a morphism $c: B(X) \to Y$ in $\CH$ such that $c \circ p_1 = c \circ p_2$ and define $k:= c \circ \iota_X$. Then, given that $k \circ \xi_X \circ \iota_X = c \circ \iota_X$, due to the uniqueness of the universal property of the Stone-\v Cech compactification, we have that $k \circ \xi_X = c$. Finally, $k$ is the unique morphism satisfying $k \circ \xi_X = c$. If there were a morphism $k'$ such that $k' \circ \xi_X = c$, then we would have $k \circ \xi_X = k' \circ \xi_X$ and consequently $k = k'$ because $\xi_X$ is an epimorphism.
\end{pf}

During the rest of the section, we will prove some properties of this resolution.

\begin{prop}\label{Bprop}
	The functor $B:\CH\to  \bSets$ preserve epimorphisms and finite coproducts.  
\end{prop}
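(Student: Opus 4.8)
The plan is to treat the two assertions separately, reducing each to facts already in hand about $\beta$ and the forgetful functor $|\ |$, with no genuinely new construction needed.

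\textbf{Epimorphisms.} The key input is the standard characterization that a morphism in $\CH$ is an epimorphism if and only if it is surjective: if $f(X)$ were a proper closed subset of $Y$, then Urysohn's lemma would produce two distinct continuous maps $Y \to [0,1]$ agreeing on $f(X)$, contradicting that $f$ is epi. Granting this, suppose $f:X\to Y$ is an epimorphism, so that $|f|:|X|\to |Y|$ is surjective. By construction $B(f)=\beta|f|:\beta|X|\to\beta|Y|$, and the naturality square for $\iota$ gives $B(f)\circ\iota_{|X|}=\iota_{|Y|}\circ|f|$. The image of this composite is $\iota_{|Y|}(|f|(|X|))=\iota_{|Y|}(|Y|)$, which is dense in $\beta|Y|$. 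Hence Lemma~\ref{epibetaCH} applies and $B(f)$ is an epimorphism in $\CH$; since $\bSets$ is a \emph{full} subcategory of $\CH$ and $B(f)$ is a morphism between $\beta$-sets, it is also an epimorphism in $\bSets$.

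\textbf{Finite coproducts.} I would first dispose of the empty coproduct: $B(\emptyset)=\beta|\emptyset|=\beta\emptyset=\emptyset$, which is the initial object of $\bSets$ by Proposition~\ref{BSetsCat}. For a binary coproduct $X\sqcup Y$ in $\CH$ with inclusions $c_1:X\to X\sqcup Y$ and $c_2:Y\to X\sqcup Y$, I use that $|\ |$ commutes with coproducts (Preliminaries), so $|X\sqcup Y|=|X|\sqcup|Y|$ and the maps $|c_1|,|c_2|$ are the canonical set-theoretic inclusions. Therefore $B(X\sqcup Y)=\beta(|X|\sqcup|Y|)$ and $B(c_i)=\beta(|c_i|)$. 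Now Proposition~\ref{BSetsCat} says $\beta$ preserves finite coproducts and that the coproduct in $\bSets$ agrees with the one in $\CH$; applied to the set-coproduct $|X|\sqcup|Y|$, this shows that $\beta(|X|\sqcup|Y|)$ together with the maps $\beta(|c_i|)$ is exactly the coproduct $\beta|X|\sqcup\beta|Y|=B(X)\sqcup B(Y)$. Thus $B(X\sqcup Y)$ with structure maps $B(c_i)$ is the coproduct of $B(X)$ and $B(Y)$, as claimed.

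Both halves are short, and the only step I expect to require care—the main obstacle—is in the epimorphism case: Lemma~\ref{epibetaCH} only yields density of $B(f)\circ\iota_{|X|}$, and that density is equivalent to \emph{surjectivity} of $|f|$, not merely to $f$ having dense image. So the argument genuinely relies on the characterization of epimorphisms in $\CH$ as surjections to guarantee $|f|(|X|)=|Y|$. Once surjectivity of $|f|$ is secured, everything else is a direct invocation of Lemma~\ref{epibetaCH}, Proposition~\ref{BSetsCat}, and the fact that $|\ |$ preserves coproducts.
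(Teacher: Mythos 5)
Your proposal is correct and follows essentially the same route as the paper: for epimorphisms it uses surjectivity of epimorphisms in $\CH$, the identity $B(f)\circ\iota_{|X|}=\iota_{|Y|}\circ|f|$ from Lemma~\ref{Bxiandiota}, and the density criterion of Lemma~\ref{epibetaCH}; for coproducts it factors $B$ as $\beta\circ|\ |$ and invokes Proposition~\ref{BSetsCat}. The only difference is that you spell out details the paper leaves implicit (the Urysohn argument for epi $=$ surjective, the empty coproduct, and the correct observation that density of $\iota_{|Y|}(|f|(|X|))$ in $\beta|Y|$ really requires $|f|$ surjective rather than $f$ merely having dense image).
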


\begin{pf}
	Let $f:X\to Y$ be an epimorphism in $\CH$, hence a continuous surjective map between compact Hausdorff spaces. By using lemma \ref{epibetaCH}, in order to show that the map $B(f):B(X)\to B(Y)$ is an epimorphism we only need to show that $B(f)\circ \iota_X:|X|\to B(Y)$ has dense image. But $B(f)\circ \iota_X=\iota_Y\circ |f|$ by lemma \ref{Bxiandiota}, $|f|$ is surjective and $\iota_Y$ has dense image.
	
Finally, the last property is clear as the functor $B$ is the composition of the forgetful functor $X\mapsto |X|$, which preserve finite coproducts, with the $\beta$ functor, which preserve finite coproducts by proposition \ref{BSetsCat}. 	
\end{pf}

\begin{prop}\label{B2prop} The assignment $X\mapsto B^{(2)}(X)$ determines a functor $B^{(2)}:\CH\to  \bSets$ which preserve finite coproducts. 
\end{prop}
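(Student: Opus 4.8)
The plan is to realise $B^{(2)}$ as a composite of two functors and reduce everything to Proposition \ref{Bprop}. Write $P(X):=B(X)\times_X B(X)$ for the fiber product used in the definition, so that $B^{(2)}=B\circ P$. First I would make $P$ into a functor $\CH\to\CH$: given $f:X\to Y$, Lemma \ref{Bxiandiota} gives $f\circ\xi_X=\xi_Y\circ B(f)$, hence $\xi_Y\circ(B(f)\circ p_1)=\xi_Y\circ(B(f)\circ p_2)$, and the universal property of $P(Y)$ produces a unique $P(f):P(X)\to P(Y)$ with $p_i^Y\circ P(f)=B(f)\circ p_i^X$. Uniqueness in the universal property yields $P(\id)=\id$ and $P(g\circ f)=P(g)\circ P(f)$, so $P$ is a functor and $B^{(2)}=B\circ P$ is a functor $\CH\to\bSets$, acting on morphisms by $B^{(2)}(f)=B(P(f))$.

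Since $B$ preserves finite coproducts and epimorphisms by Proposition \ref{Bprop}, it suffices to show $P$ does. For coproducts I would use Proposition \ref{BSetsCat}: $B(X\sqcup Y)=\beta(|X|\sqcup|Y|)\cong B(X)\sqcup B(Y)$, and $\xi_{X\sqcup Y}$ restricts to $\xi_X$ and $\xi_Y$ on the two summands. Because the base $X\sqcup Y$ is itself a coproduct and $\xi$ respects the decomposition, a point of the fiber product must have both coordinates in the same summand (a point of $B(X)$ has $\xi$-image in $X$, one of $B(Y)$ in $Y$, and these are disjoint in $X\sqcup Y$); the cross terms vanish and $P(X\sqcup Y)\cong P(X)\sqcup P(Y)$. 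Applying $B$ and Proposition \ref{Bprop} again gives $B^{(2)}(X\sqcup Y)\cong B^{(2)}(X)\sqcup B^{(2)}(Y)$, and the initial object is handled by $P(\emptyset)=\emptyset$.

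For epimorphisms I would reduce, again via Proposition \ref{Bprop}, to proving $P(f)$ surjective whenever $f$ is. Using that $|\ |$ preserves fiber products, a point of $P(Y)$ is a pair of ultrafilters $(\mathcal V_1,\mathcal V_2)$ on $|Y|$ with $\xi_Y(\mathcal V_1)=\xi_Y(\mathcal V_2)=:y$, and a preimage is a pair $(\mathcal U_1,\mathcal U_2)$ on $|X|$ with $|f|_*\mathcal U_i=\mathcal V_i$ and, crucially, $\xi_X(\mathcal U_1)=\xi_X(\mathcal U_2)$. Since $f$ is surjective, so is $B(f)=\beta|f|$, and each $\mathcal V_i$ admits lifts $\mathcal U_i$ by extending the filter generated by $\{|f|^{-1}(V):V\in\mathcal V_i\}$ to an ultrafilter; by Lemma \ref{Bxiandiota} any such lift satisfies $f(\xi_X(\mathcal U_i))=y$.

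The hard part is precisely the diagonal constraint $\xi_X(\mathcal U_1)=\xi_X(\mathcal U_2)$: lifting the two coordinates independently only forces $\xi_X(\mathcal U_1),\xi_X(\mathcal U_2)$ into the common fiber $f^{-1}(y)$, not to coincide. I would therefore try to lift the pair simultaneously, constructing a single ultrafilter $\mathcal W$ on $|X\times X|$ refining both the preimage filters and the neighbourhood filter of the diagonal $\Delta_X\subseteq X\times X$, so that $\lim\mathcal W\in\Delta_X$ forces the two projected limits to agree while the projections push forward to $\mathcal V_1,\mathcal V_2$. This requires the finite-intersection property $U\cap(|f|^{-1}(V_1)\times|f|^{-1}(V_2))\neq\emptyset$ for every neighbourhood $U$ of $\Delta_X$ and $V_i\in\mathcal V_i$, i.e. finding points of $X$ close together whose $f$-images lie respectively in $V_1$ and $V_2$. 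Since $f$ need not be open, the two ultrafilters can be forced to converge to distinct points of $f^{-1}(y)$, so this matching property need not hold for an arbitrary surjection; establishing it (or isolating the hypotheses under which it holds) is exactly where I expect the real work, and the main obstacle, to lie.
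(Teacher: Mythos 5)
Your reduction $B^{(2)}=B\circ P$ with $P(X):=B(X)\times_X B(X)$, the functoriality of $P$ via the universal property of the fibre product, and the coproduct computation (cross terms vanish because the base is a coproduct) are exactly the paper's argument --- the paper writes $\widetilde B$ for your $P$ --- and those parts are fine. The gap is, as you say yourself, the surjectivity of $P(f)$ for $f$ a surjection: you reduce it to a finite-intersection/matching property for the two lifted ultrafilters and stop. For the record, the paper fills this slot by a two-step lift: it forms the auxiliary map $F=(Bf,\xi_Y):B(Y)\to B(X)\times_X Y$ (for $f:Y\to X$ the epimorphism), argues via Lemma \ref{epibetaCH} that $F\circ\iota_{|Y|}$ has dense image and hence that $F$ is surjective, then lifts $x_2$ arbitrarily to $y_2\in B(Y)$ and uses surjectivity of $F$ to produce $y_1$ with $Bf(y_1)=x_1$ and $\xi_Y(y_1)=\xi_Y(y_2)$.

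However, your suspicion that the matching property ``need not hold for an arbitrary surjection'' is not merely a gap in your own write-up: it is a genuine obstruction, and it defeats the paper's argument at the same spot. Take $X=\{0\}\cup\{1/n:n\ge1\}$ (a convergent sequence), $Y=Y_1\sqcup Y_2$ with $Y_1=\{0\}\cup\{1/2n\}$ and $Y_2=\{0\}\cup\{1/(2n+1)\}$, and $f:Y\to X$ the evident continuous surjection. Let $x_1,x_2\in B(X)$ be nonprincipal ultrafilters containing $\{1/2n\}$ and $\{1/(2n+1)\}$ respectively; both converge to $0$, so $(x_1,x_2)\in B(X)\times_X B(X)$. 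Any $y\in B(Y)$ with $Bf(y)=x_1$ contains $|f|^{-1}(\{1/2n\})=Y_1\setminus\{0^{(1)}\}$ and is nonprincipal, hence converges to the copy $0^{(1)}$ of $0$ in $Y_1$; likewise every lift of $x_2$ converges to $0^{(2)}\in Y_2$. Since $0^{(1)}\neq0^{(2)}$, no pair $(y_1,y_2)$ with $\xi_Y(y_1)=\xi_Y(y_2)$ lifts $(x_1,x_2)$, so $P(f)$ is not surjective and $B^{(2)}(f)=B(P(f))$ is not an epimorphism. The same example shows the paper's $F$ is not surjective: the point $(x_1,0^{(2)})\in B(X)\times_X Y$ has no preimage, and the nonempty open set obtained by intersecting $B(X)\times_X Y$ with (basic clopen of $B(X)$ determined by $\{1/2n\}$)$\,\times\,Y_2$ misses the image of $F\circ\iota_{|Y|}$; the unjustified step in the paper is the assertion that ``$f:V\to W$ and $\xi_X:U\to W$ are surjective'' for a basic open $U\times_W V$. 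So you were right to stop where you stopped: the epimorphism claim as stated is false (what survives is surjectivity of $B(Y)\times_X B(Y)\to B(X)\times_X B(X)$, where the two coordinates can be lifted independently), and its later use in Proposition \ref{extensionverifiesstar} would need to be rerouted accordingly.
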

\begin{pf}
	For any compact Hausdorff space $X$, we will denote by $\widetilde{B}(X)$ the fibre product $B(X)\times_X B(X)$, so $B^{(2)}(X)=B(\widetilde{B}(X))$. Once we show that $\widetilde{B}$ determines a functor with all the properties we will be done by using the previous proposition. 
		
	Given any map $f:Y\to X$ in $\CH$, consider the unique map $\widetilde{B}(f)$ such that  
	$$
	\xymatrix{
		\widetilde{B}(Y)=B(Y) \times_Y B(Y) \ar@<1ex>[r] \ar@<-1ex>[r] \ar[d]_-{\widetilde{B}(f)} & B(Y) \ar[r]^-{\xi_Y} \ar[d]_-{\beta f} & Y \ar[d]^-{f} \\
		\widetilde{B}(X)=B(X) \times_X B(X) \ar@<1ex>[r] \ar@<-1ex>[r] & B(X) \ar[r]^-{\xi_X} & X 
	}
	$$
	given by the universal property of the fibre product, where we used the commutativity $f\circ \xi_Y=\xi_X\circ \beta f$ from lemma \ref{Bxiandiota}. One verifies that this gives the functor  $\widetilde{B}$.

Now, we need to prove that the functor $\tilde{B}$ preserves finite coproducts. It is easy to check that it preserves the initial object. For non-empty coproducts, it will suffice to show that
$$
B(X \sqcup Y) \times_{X \sqcup Y} B(X \sqcup Y) = \left( B(X) \times_X B(X) \right) \sqcup \left( B(Y) \times_Y B(Y) \right),
$$
But we have that 
\begin{multline*}
\left( B(X) \sqcup B(Y) \right) \times_{X \sqcup Y} \left( B(X) \sqcup B(Y) \right) = 
\\ = (B(X) \times_{X \sqcup Y} B(X)) \sqcup (B(X) \times_{X \sqcup Y} B(Y)) \sqcup \\ \sqcup  (B(Y) \times_{X \sqcup Y} B(X)) \sqcup (B(Y) \times_{X \sqcup Y} B(Y))
\end{multline*}
and it is clear that 
\[B(X) \times_{X \sqcup Y} B(X) = B(X) \times_{X} B(X) 
 , \quad B(Y) \times_{X \sqcup Y} B(Y) = B(Y) \times_{Y} B(Y) \]
\[
B(X) \times_{X \sqcup Y} B(Y) = \emptyset = B(Y) \times_{X \sqcup Y} B(X). 
\]
\end{pf}

\begin{cor}\label{mapfreeresolutions}
	Given any map $f:X\to Y$ in $\CH$, the following diagram commutes 
	$$
	\xymatrix{
		{\BB (X)} \ar[d]_-{{\BB f}} \ar@<1ex>[r] \ar@<-1ex>[r] & B(X) \ar[d]_-{\beta f} \ar[r] & X \ar[d]_-{f} \\
		{\BB (Y)} \ar@<1ex>[r] \ar@<-1ex>[r] & B(Y) \ar[r] & Y
	}
	$$	

\end{cor}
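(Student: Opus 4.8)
The plan is to split the diagram into its right-hand square and its left-hand pair of parallel squares, and to check each separately; in both cases the work reduces to the compatibility relations of Lemma~\ref{Bxiandiota} together with the construction of $\widetilde{B}$ carried out in Proposition~\ref{B2prop}. Throughout I write $p_i^X, p_i^Y$ for the projections of $\widetilde{B}(X)=B(X)\times_X B(X)$ and $\widetilde{B}(Y)=B(Y)\times_Y B(Y)$, so that by definition $\pi_i=p_i^X\circ\xi_{\widetilde{B}(X)}$ on $\BB(X)=B(\widetilde{B}(X))$ and likewise on $\BB(Y)$, and I recall that $\BB f = B(\widetilde{B}(f))$.

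The right-hand square, whose horizontal arrows are $\xi_X$ and $\xi_Y$ and whose vertical arrows are $\beta f$ and $f$, commutes by the identity $f\circ\xi_X=\xi_Y\circ\beta f$, which is precisely the first assertion of Lemma~\ref{Bxiandiota}. So that half is immediate.

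For the left-hand squares I must verify $\beta f\circ\pi_i=\pi_i\circ\BB f$ for $i=1,2$. Starting from the bottom-then-right composite, I would compute $\pi_i\circ\BB f = p_i^Y\circ\xi_{\widetilde{B}(Y)}\circ B(\widetilde{B}(f))$. Applying Lemma~\ref{Bxiandiota} to the $\CH$-morphism $\widetilde{B}(f)\colon\widetilde{B}(X)\to\widetilde{B}(Y)$ rewrites $\xi_{\widetilde{B}(Y)}\circ B(\widetilde{B}(f))=\widetilde{B}(f)\circ\xi_{\widetilde{B}(X)}$. Finally the defining relation of $\widetilde{B}(f)$ from Proposition~\ref{B2prop}, namely $p_i^Y\circ\widetilde{B}(f)=\beta f\circ p_i^X$, yields $\pi_i\circ\BB f=\beta f\circ p_i^X\circ\xi_{\widetilde{B}(X)}=\beta f\circ\pi_i$, as desired.

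The only point requiring care---and the closest thing to an obstacle---is the two-fold occurrence of the $\beta$-ification: the left vertical map $\BB f$ is $B$ applied to $\widetilde{B}(f)$, so Lemma~\ref{Bxiandiota} has to be invoked at the level of the fiber-product spaces $\widetilde{B}(X),\widetilde{B}(Y)$ (to move $\xi$ past the map $\widetilde{B}(f)$) before one can fall back on the already-proved naturality of the projections. Once this bookkeeping is respected, the verification is a purely formal diagram chase.
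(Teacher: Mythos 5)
Your proposal is correct and follows exactly the same route as the paper's (much terser) proof: the right square is the identity $f\circ\xi_X=\xi_Y\circ\beta f$ of Lemma~\ref{Bxiandiota}, and the left squares follow from the defining square of $\widetilde{B}(f)$ in Proposition~\ref{B2prop} together with a second application of Lemma~\ref{Bxiandiota} to the map $\widetilde{B}(f)$. You have merely written out the diagram chase that the paper leaves implicit, including the correct handling of the double $\beta$-ification in $\BB f=B(\widetilde{B}(f))$.
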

\begin{pf}
    Taking into account the previous discussion about the functor $\tilde{B}$, the commutativity of the diagram is guaranteed by lemma \ref{Bxiandiota}. 
\end{pf}

\begin{cor}\label{coproductfreeresolutions}
	Given two objects $X$ and $Y$ in $\CH$. Then 	$$
	\xymatrix{
		{\BB (X) \sqcup \BB (Y)} \ar@<1ex>[r]^-{{\tilde{p_1} \sqcup \tilde{q_1}}}  \ar@<-1ex>[r]_-{{\tilde{p_2} \sqcup \tilde{q_2}}} & {B(X) \sqcup B(Y)} \ar[r]_-{{\xi_X \sqcup \xi_Y}} & {X \sqcup Y} 
	}
	$$
	is the standard free resolution of $X\sqcup Y$. 	
\end{cor}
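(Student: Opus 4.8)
The plan is to match, term by term and map by map, the standard free resolution of $X\sqcup Y$—which by Lemma~\ref{freeresolution} is the diagram $B^{(2)}(X\sqcup Y)\rightrightarrows B(X\sqcup Y)\xrightarrow{\xi_{X\sqcup Y}}X\sqcup Y$—with the displayed coproduct diagram. The three vertical identifications will come from the coproduct-preservation of the functors $B$, $B^{(2)}$, and of the identity, while the compatibility of the three structure maps with these identifications will come from the naturality of $\xi$ recorded in Lemma~\ref{Bxiandiota}, together with Corollary~\ref{mapfreeresolutions}.

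First I would identify the two right-hand columns. By Proposition~\ref{Bprop} the functor $B$ preserves finite coproducts, so $B(X\sqcup Y)=B(X)\sqcup B(Y)$, and by functoriality the coproduct inclusions are $B(i_X),B(i_Y)$ for the canonical inclusions $i_X\colon X\hookrightarrow X\sqcup Y$ and $i_Y\colon Y\hookrightarrow X\sqcup Y$. Applying Lemma~\ref{Bxiandiota}, which gives $g\circ\xi_A=\xi_{A'}\circ B(g)$ for any $g\colon A\to A'$ in $\CH$, to $g=i_X$ and $g=i_Y$, yields $i_X\circ\xi_X=\xi_{X\sqcup Y}\circ B(i_X)$ and $i_Y\circ\xi_Y=\xi_{X\sqcup Y}\circ B(i_Y)$. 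Hence $\xi_{X\sqcup Y}$ and $\xi_X\sqcup\xi_Y$ agree after precomposition with both summand inclusions, and are therefore equal by the universal property of the coproduct.

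Next I would treat the left column and the two face maps, where essentially all of the content lies. By Proposition~\ref{B2prop} the functor $B^{(2)}$ preserves finite coproducts; in fact its proof establishes the fiber-product decomposition $\widetilde{B}(X\sqcup Y)=\widetilde{B}(X)\sqcup\widetilde{B}(Y)$ (the cross terms $B(X)\times_{X\sqcup Y}B(Y)$ and $B(Y)\times_{X\sqcup Y}B(X)$ being empty), and applying $B$ gives $B^{(2)}(X\sqcup Y)=B^{(2)}(X)\sqcup B^{(2)}(Y)$. Recalling $\pi_i=p_i\circ\xi_{\widetilde{B}(X)}$, I would check two things: that the fiber-product projections $p_i$ for $X\sqcup Y$ restrict along the summand inclusions to the projections for $X$ and for $Y$, which is immediate from the decomposition of $\widetilde{B}(X\sqcup Y)$ just recalled; and that $\xi_{\widetilde{B}(X\sqcup Y)}=\xi_{\widetilde{B}(X)}\sqcup\xi_{\widetilde{B}(Y)}$, by the same naturality argument used above for $\xi_{X\sqcup Y}$ applied now to $A=\widetilde{B}(X)$ and $A=\widetilde{B}(Y)$. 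Composing the two, the face maps $\pi_i$ of the resolution of $X\sqcup Y$ decompose as $\tilde p_i\sqcup\tilde q_i$.

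The main obstacle is precisely the bookkeeping of this last step: one must ensure that passing from the projections $p_i$ to the face maps $\pi_i=p_i\circ\xi_{\widetilde{B}(X)}$ preserves the decomposition, which requires the naturality of $\xi$ and the vanishing of the cross terms to be used simultaneously. Once the three columns are matched, the commuting squares of Corollary~\ref{mapfreeresolutions} applied to $i_X$ and $i_Y$ show that these identifications assemble into a morphism of (coequalizer) diagrams, so the displayed diagram is canonically isomorphic to—and hence is—the standard free resolution of $X\sqcup Y$.
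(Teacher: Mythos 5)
Your proof is correct and follows essentially the same route as the paper: both reduce the statement to the coproduct-preservation of $B$ and $B^{(2)}$ (Propositions~\ref{Bprop} and~\ref{B2prop}) together with the naturality of $\xi$ from Lemma~\ref{Bxiandiota} to identify $\xi_{X\sqcup Y}$ with $\xi_X\sqcup\xi_Y$ and the face maps with $\tilde p_i\sqcup\tilde q_i$. The only difference is one of detail: the paper's proof is a two-line appeal to the argument in Proposition~\ref{B2prop}, whereas you spell out the column-by-column identifications and the compatibility of the structure maps explicitly.
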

\begin{pf}
    First, consider the standard free resolutions of $X$ and $Y$
    $$
    \xymatrix{
    {\BB (X)} \ar@<1ex>[r]^-{{\tilde{p_1}}} \ar@<-1ex>[r]_-{{\tilde{p_2}}} & B(X) \ar[r]^-{{\xi_X}} & X & {\BB (Y)} \ar@<1ex>[r]^-{{\tilde{q_1}}} \ar@<-1ex>[r]_-{{\tilde{q_2}}} & B(Y) \ar[r]^-{{\xi_Y}} & Y
    }
    $$
    Additionally, notice that the argument used in proposition \ref{B2prop} to prove that coproducts are preserved is equally valid for showing that the maps $\xi_{X \sqcup Y} : B(X \sqcup Y) \to X \sqcup Y$ and $\xi_X \sqcup \xi_Y : B(X) \sqcup B(Y) \to X \sqcup Y$ are ``equal'' (considering that $B(X \sqcup Y) \cong B(X) \sqcup B(Y)$). Therefore, we can immediately deduce that
    $$
	\xymatrix{
		{\BB (X) \sqcup \BB (Y)} \ar@<1ex>[r]^-{{\tilde{p_1} \sqcup \tilde{q_1}}}  \ar@<-1ex>[r]_-{{\tilde{p_2} \sqcup \tilde{q_2}}} & {B(X) \sqcup B(Y)} \ar[r]_-{{\xi_X \sqcup \xi_Y}} & {X \sqcup Y} 
	}
	$$
	is the standard free resolution of $X\sqcup Y$.
\end{pf}

\section{Naive Condensed Sets.}

We will denote by $\wCH$ and $\wbSets$ the categories of contravariant functors $\CH\to \Sets$ and $\bSets\to \Sets$, respectively, also called the categories of presheaves, with natural transformations as maps. 

We will denote by $\wCH_{\star}$ the full subcategory of functors $\cF$ in $\wCH$ such that, for any epimorphism $f:Y\to X$ in $\CH$, the natural map 
\begin{equation} \label{star}
    \cF(X)\to \eq (\cF(Y)\rightrightarrows  \cF(Y\times_XY) )
\end{equation}
is a bijection.  We will say that $\cF$ verifies property $(\star)$. 

We will denote by $\wbSets_{\times}$ the full subcategory of $\wbSets$ that send finite coproducts to finite products; equivalently, they are the contravariant functors $\cF:\bSets\to \Sets$ such that $\cF(\emptyset)$ is a one element set and that for any sets $S$ and $T$, we have $\cF(\beta S\sqcup \beta T)\cong \cF(\beta S)\times \cF(\beta T)$ via the natural map.

\begin{defn}
	A naive condensed set $\cC$ is an element of $\wbSets_{\times}$.
	
	The category $\ncSets$ of naive condensed sets is $\wbSets_{\times}$.
\end{defn}

Recall the following definition of Clausen and Scholze of the category of condensed sets.

\begin{defn}
	A condensed set $\cF$ is a contravariant functor from $\CH$ to $\Sets$ which sends finite coproducts to finite products, and such that, for any epimorphism $f:Y\to X$, the natural map 
	$$\cF(X)\to \eq (\cF(Y)\rightrightarrows \cF(Y\times_XY) )$$
	is a bijection. 
	
	Hence, the category $\cSets$ of condensed sets is the full subcategory of the functor category $\wCH_{\star}$ that sends finite coproducts to finite products.
\end{defn}

\begin{thm}\label{equivalence}
	The restriction functor $\res: \wCH \to \wbSets$ gives an equivalence of categories between $\wCH_{\star}$ and  $\wbSets$.
\end{thm}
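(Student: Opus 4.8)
The plan is to construct an explicit quasi-inverse to $\res$ out of the standard free resolution of Lemma \ref{freeresolution}. For a presheaf $\cG\in\wbSets$ and a compact Hausdorff space $X$, I would set
\[
\ex(\cG)(X):=\eq\left(\cG(B(X))\underset{\cG(\pi_2)}{\overset{\cG(\pi_1)}{\rightrightarrows}}\cG(\BB(X))\right),
\]
where $\pi_1,\pi_2\colon\BB(X)\to B(X)$ are the two structure maps of the resolution. By Corollary \ref{mapfreeresolutions} every morphism $f\colon X\to Y$ induces a morphism of resolution diagrams, hence a map on equalizers, so $\ex$ is a well-defined functor $\wbSets\to\wCH$. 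The theorem then reduces to three assertions: (A) $\res\circ\ex\cong\id_{\wbSets}$; (B) $\ex\circ\res\cong\id$ on $\wCH_{\star}$; and (C) $\ex(\cG)$ always satisfies property $(\star)$, so that $\ex$ indeed lands in $\wCH_{\star}$. Granting (A)--(C), $\res$ and $\ex$ are mutually quasi-inverse equivalences between $\wCH_{\star}$ and $\wbSets$.

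For (A) I would evaluate on a $\beta$-set $\beta S$. The map $\cG(\xi_{\beta S})\colon\cG(\beta S)\to\cG(B(\beta S))$ factors through the equalizer since $\xi_{\beta S}\circ\pi_1=\xi_{\beta S}\circ\pi_2$. Because $\beta S$ is extremally disconnected, Lemma \ref{betaex} provides a section $s$ of $\xi_{\beta S}$, so $\cG(s)$ retracts $\cG(\xi_{\beta S})$ and the latter is injective. For surjectivity onto the equalizer I would use projectivity of $B(\beta S)$ to lift the map $(s\circ\xi_{\beta S},\id)\colon B(\beta S)\to B(\beta S)\times_{\beta S}B(\beta S)$ through the epimorphism $\xi\colon\BB(\beta S)\to B(\beta S)\times_{\beta S}B(\beta S)$ to some $\psi$; then for $\alpha$ in the equalizer one computes $\alpha=\cG(\psi)\cG(\pi_2)(\alpha)=\cG(\psi)\cG(\pi_1)(\alpha)=\cG(\xi_{\beta S})\cG(s)(\alpha)$, exhibiting $\alpha$ in the image. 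Naturality in $S$ is routine from Corollary \ref{mapfreeresolutions}.

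For (B) let $\cF\in\wCH_{\star}$. Property $(\star)$ for the epimorphism $\xi_X\colon B(X)\to X$ gives $\cF(X)\cong\eq(\cF(B(X))\rightrightarrows\cF(B(X)\times_X B(X)))$ with respect to the two projections $p_1,p_2$. The key point is that this equalizer coincides with $\ex(\res\cF)(X)=\eq(\cF(B(X))\rightrightarrows\cF(\BB(X)))$: since the canonical $\xi\colon\BB(X)\to B(X)\times_X B(X)$ is an epimorphism and $\cF$ satisfies $(\star)$, the map $\cF(\xi)$ is injective (it realizes $\cF(B(X)\times_X B(X))$ as the equalizer sitting inside $\cF(\BB(X))$); and as $\pi_i=p_i\circ\xi$, the conditions $\cF(\pi_1)\alpha=\cF(\pi_2)\alpha$ and $\cF(p_1)\alpha=\cF(p_2)\alpha$ are equivalent. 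Hence both equalizers agree and $\ex(\res\cF)(X)\cong\cF(X)$, naturally in $X$. The same injectivity argument also shows directly that $\res$ is fully faithful, which is reassuring, though it is subsumed by (A)--(C).

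The main obstacle is (C): verifying $(\star)$ for $\ex(\cG)$ against an \emph{arbitrary} epimorphism $f\colon Y\to X$, not merely the resolution covers $\xi_X$. This is a descent (``comparison lemma'') statement, asserting that the iterated equalizer $\eq(\ex\cG(Y)\rightrightarrows\ex\cG(Y\times_X Y))$ recovers $\ex\cG(X)$. I would attack it by comparing the free resolutions of $X$, $Y$ and $Y\times_X Y$: using the functoriality of $B$ and $\BB$ together with their compatibility with fiber products and epimorphisms from Propositions \ref{betafiberproduct}, \ref{Bprop} and \ref{B2prop}, one relates the double system of $\beta$-sets resolving the cover $f$ to the resolution of $X$ and reduces the desired bijection to the already-settled case of the split resolution covers. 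This interchange of limits, governed throughout by the projectivity of $\beta$-sets, is where the genuine work lies; the residual naturality verifications are mechanical.
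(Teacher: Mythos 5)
Your overall architecture is the paper's: the same extension functor $\ex$, and the same three assertions (A), (B), (C), which correspond respectively to Propositions \ref{exres}, \ref{resex} and \ref{extensionverifiesstar}. Your arguments for (A) and (B) are correct and essentially identical to the paper's: in (A) your lift $\psi$ plays the role of the map $k$ produced there via Proposition \ref{betafiberproduct}, and the computation $\alpha=\cG(\xi_{\beta S})\cG(s)(\alpha)$ is precisely the split-equalizer mechanism the paper isolates as Lemma \ref{clau}; in (B) the identification of the two equalizers via injectivity of $\cF(\xi_{\widetilde{B}(X)})$ is exactly Proposition \ref{resex}.

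The genuine gap is (C). You correctly flag it as ``where the genuine work lies,'' but what you offer is a plan, not a proof, and the plan does not identify the ingredient that makes the reduction to the split case possible. That ingredient is Proposition \ref{lemaclau:Bf}: for an \emph{arbitrary} epimorphism $f\colon Y\to X$ in $\CH$ and \emph{any} presheaf $\cF\in\wbSets$,
\[
\cF(B(X))\to\cF(B(Y))\rightrightarrows\cF(B(Y\times_XY))
\]
is an equalizer. This is not a formal consequence of projectivity of $\beta$-sets or of compatibility of $B$ with fiber products; it holds because the underlying surjection of sets $|f|$ splits by the axiom of choice, so $Bf$ admits the section $\beta g$, and the set-level map $y\mapsto(g(|f|(y)),y)$ gives, after applying $\beta$, a contracting map exhibiting $B(Y\times_XY)\rightrightarrows B(Y)\to B(X)$ as a split, hence absolute, equalizer (Lemma \ref{clau} again). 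With this in hand the paper verifies $(\star)$ for $\hcF$ by a concrete chase: injectivity of $\hcF(f)$ follows from injectivity of $\cF(Bf)$ composed with the equalizer inclusion $\hcF(X)\hookrightarrow\cF(B(X))$; surjectivity is obtained by taking $c$ in the equalizer, setting $d:=\cF(\beta g)(c)$, and checking $\cF(\tilde p_1)(d)=\cF(\tilde p_2)(d)$, which requires the further observation that $\cF(\BB f)$ is injective because $\BB f$ is an epimorphism of $\beta$-sets and therefore splits (Proposition \ref{B2prop} and Lemma \ref{betaex}). None of these steps is present in your sketch, so (C) --- and with it the theorem --- remains unproved as written.
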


\begin{cor}\label{equivalencestar}
	The restriction functor $\res:\wCH \to \wbSets$ gives an equivalence of categories between $\cSets$ and  $\ncSets$.
\end{cor}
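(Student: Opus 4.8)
The plan is to deduce the corollary from Theorem \ref{equivalence} by checking that, under the equivalence $\res\colon \wCH_\star \to \wbSets$, the full subcategory $\cSets$ of $\wCH_\star$ corresponds exactly to the full subcategory $\ncSets = \wbSets_\times$ of $\wbSets$. Both $\cSets$ and $\ncSets$ are full subcategories cut out by a property that is invariant under isomorphism, so it is a formal fact that an equivalence restricts to an equivalence between them as soon as, for every $\cF$ in $\wCH_\star$, one has $\cF \in \cSets$ if and only if $\res\cF \in \ncSets$. Thus the whole proof reduces to this biconditional: an object of $\wCH_\star$ sends finite coproducts in $\CH$ to finite products precisely when its restriction sends finite coproducts of $\beta$-sets to finite products.

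The forward implication is immediate. If $\cF$ sends finite coproducts to products, then in particular $\cF(\emptyset)$ is a one-element set, and for $\beta$-sets $\beta S, \beta T$ the coproduct $\beta S \sqcup \beta T$ taken in $\bSets$ agrees with the one taken in $\CH$ by Proposition \ref{BSetsCat}, so $\cF(\beta S \sqcup \beta T) \cong \cF(\beta S) \times \cF(\beta T)$; hence $\res\cF \in \wbSets_\times$.

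For the converse, write $\cG := \res\cF$ and assume $\cG \in \wbSets_\times$. The key tool is a reconstruction of $\cF$ from $\cG$ in terms of $\beta$-sets, extracted directly from property $(\star)$: applying $(\star)$ to the epimorphism $\xi_X\colon B(X) \to X$ identifies $\cF(X)$ with $\eq(\cF(B(X)) \rightrightarrows \cF(\widetilde{B}(X)))$, where $\widetilde{B}(X) = B(X)\times_X B(X)$; applying $(\star)$ again to the epimorphism $\xi_{\widetilde{B}(X)}\colon \BB(X) \to \widetilde{B}(X)$ shows that $\cF(\xi_{\widetilde{B}(X)})$ is the inclusion of an equalizer and hence injective, so the two equalizers coincide and one obtains a natural isomorphism
\[
\cF(X) \cong \eq\!\left( \cG(B(X)) \underset{\cG(\pi_2)}{\overset{\cG(\pi_1)}{\rightrightarrows}} \cG(\BB(X)) \right),
\]
naturality in $X$ coming from Corollary \ref{mapfreeresolutions}. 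Now apply this to $X \sqcup Y$. By Propositions \ref{Bprop} and \ref{B2prop} the functors $B$ and $\BB$ preserve finite coproducts, and by Corollary \ref{coproductfreeresolutions} the standard free resolution of $X \sqcup Y$ is the coproduct of those of $X$ and of $Y$, so the parallel pair for $X \sqcup Y$ is identified with $\pi_i^X \sqcup \pi_i^Y$. Since $B(X)\sqcup B(Y)$ and $\BB(X)\sqcup \BB(Y)$ are coproducts of $\beta$-sets, the hypothesis $\cG \in \wbSets_\times$ gives natural decompositions $\cG(B(X)\sqcup B(Y)) \cong \cG(B(X))\times \cG(B(Y))$ and likewise for $\BB$, under which the parallel pair becomes the product of the parallel pairs for $X$ and for $Y$. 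As equalizers commute with finite products in $\Sets$, this yields $\cF(X \sqcup Y) \cong \cF(X)\times \cF(Y)$ via the natural comparison map; the empty case follows from $B(\emptyset) = \BB(\emptyset) = \emptyset$ and $\cG(\emptyset) = \{\ast\}$. Hence $\cF \in \cSets$, and the biconditional is proved.

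The main obstacle is the converse direction, and specifically the need for the $\beta$-set reconstruction formula for $\cF$. A naive attempt applying property $(\star)$ to the resolution of $X \sqcup Y$ directly stalls, because the fiber products $\widetilde{B}(X), \widetilde{B}(Y)$ appearing there are not $\beta$-sets, so the hypothesis on $\cG$ does not apply to them and one is pushed into an infinite regress of resolutions. Collapsing this regress into the single equalizer above—by observing that the second application of $(\star)$ contributes only an injection—is the crux; once $\cF(X)$ is expressed purely through $\cG$ on $\beta$-sets, the preservation of coproducts is the formal statement that limits commute with limits. The only routine verification left is the compatibility of the coproduct-to-product isomorphisms with the two structure maps $\pi_1,\pi_2$, which is just naturality of the isomorphism $\cG(A \sqcup B) \cong \cG(A)\times \cG(B)$.
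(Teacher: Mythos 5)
Your proof is correct and takes essentially the same route as the paper: in both, the heart of the matter is Corollary \ref{coproductfreeresolutions} combined with the coproduct-to-product hypothesis on the restricted functor and the fact that equalizers commute with finite products in $\Sets$. The only difference is presentational: your ``reconstruction formula'' expressing $\cF(X)$ as an equalizer over $\beta$-sets is exactly Proposition \ref{resex} of the paper (re-derived inline rather than cited), so your converse direction coincides with the paper's step that the extension functor $\ex$ carries $\wbSets_{\times}$ into $\cSets$.
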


To prove the theorem and the corollary we will construct an extension functor 
$\ex: \wbSets\to \wCH$ which will give us the inverse functor in both cases. 

Given any $\cF \in \wbSets$, and given any $X$ in $\CH$, we consider 
$$ \BB(X)) \rightrightarrows B (X) \to X $$
the standard free resolution of $X$ and we define 
$$
\hcF(X):= \eq \left( \cF(B(X)) \rightrightarrows \cF(\BB (X)) \right)
$$

\begin{lem}\label{extension}
	Given any $\cF \in \wbSets$, the assignment $X\mapsto \hcF(X)$ gives an element $\hcF\in \wCH$. 
	
	The assignment $ \cF\mapsto \hcF$ determines a functor $\ex:\wbSets\to \wCH$, which we call the extension functor from $\wbSets$ to $\wCH$. 
\end{lem}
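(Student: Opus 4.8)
The plan is to verify the statement in two stages, matching its two assertions: first that for a fixed $\cF\in\wbSets$ the assignment $X\mapsto\hcF(X)$ is a contravariant functor $\CH\to\Sets$, and then that $\cF\mapsto\hcF$ is itself functorial. In both stages the mechanism is identical: the standard free resolution is functorial (Corollary \ref{mapfreeresolutions}), so applying the contravariant $\cF$ turns a morphism of resolutions into a morphism between the two parallel-pair diagrams whose equalizers are $\hcF(X)$ and $\hcF(Y)$, and the universal property of the equalizer then produces the induced map. Equalizers in $\Sets$ are concrete, $\eq(\cF(B(X))\rightrightarrows\cF(\BB(X)))=\{a\in\cF(B(X)):\cF(\pi_1)(a)=\cF(\pi_2)(a)\}$, which reduces every verification to a short diagram chase.

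For the first assertion, given a morphism $f\colon X\to Y$ in $\CH$, Corollary \ref{mapfreeresolutions} supplies maps $\beta f\colon B(X)\to B(Y)$ and $\BB f\colon\BB(X)\to\BB(Y)$ satisfying $\beta f\circ\pi_i^X=\pi_i^Y\circ\BB f$ for $i=1,2$. Applying the contravariant functor $\cF$ yields $\cF(\pi_i^X)\circ\cF(\beta f)=\cF(\BB f)\circ\cF(\pi_i^Y)$, and I would use these identities to check that $\cF(\beta f)$ carries $\hcF(Y)$ into $\hcF(X)$: if $a\in\hcF(Y)$ then $\cF(\pi_1^Y)(a)=\cF(\pi_2^Y)(a)$, whence the elements $\cF(\pi_i^X)(\cF(\beta f)(a))=\cF(\BB f)(\cF(\pi_i^Y)(a))$ agree for $i=1,2$. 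This defines $\hcF(f)\colon\hcF(Y)\to\hcF(X)$ as the restriction of $\cF(\beta f)$. The relations $\hcF(\id)=\id$ and $\hcF(g\circ f)=\hcF(f)\circ\hcF(g)$ are then inherited from the fact that $B$ is a functor (Proposition \ref{Bprop}) together with the (contravariant) functoriality of $\cF$, since the induced maps are simply restrictions of $\cF(\beta(-))$.

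For the second assertion, given a natural transformation $\eta\colon\cF\to\cG$ in $\wbSets$, I would define $\hat\eta_X\colon\hcF(X)\to\hcG(X)$ as the restriction of the component $\eta_{B(X)}$. That this restriction is legitimate uses naturality of $\eta$ against the maps $\pi_i^X$: the squares relating $\eta_{B(X)}$, $\eta_{\BB(X)}$, $\cF(\pi_i^X)$ and $\cG(\pi_i^X)$ commute, so $\eta_{B(X)}$ sends $\hcF(X)$ into $\hcG(X)$ by the same equalizer argument as above. Naturality of $\hat\eta$ in the variable $X$ follows from naturality of $\eta$ against $\beta f$, and compatibility with composition and identities of natural transformations is immediate because every component is a restriction of a component of $\eta$. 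This produces the extension functor $\ex\colon\wbSets\to\wCH$.

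No step presents a genuine obstacle; the only point requiring care is the bookkeeping that the single induced map $\beta f$ must intertwine both parallel maps $\pi_1$ and $\pi_2$ simultaneously, so that it descends to equalizers, and this is exactly the content of the commutativity in Corollary \ref{mapfreeresolutions}. One should also keep the variance straight, so that $f\colon X\to Y$ induces $\hcF(f)\colon\hcF(Y)\to\hcF(X)$; beyond this the argument is purely formal.
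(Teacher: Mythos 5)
Your proposal is correct and follows essentially the same route as the paper: functoriality in $X$ comes from the map of standard free resolutions supplied by Corollary \ref{mapfreeresolutions}, and functoriality in $\cF$ comes from the naturality squares of $\tau$ against $\pi_i$ and $\beta f$, both combined with the universal property of the equalizer. The only cosmetic difference is that you chase explicit elements of the equalizer viewed as a subset of $\cF(B(X))$, where the paper invokes the equalizer's universal property abstractly; the content is identical.
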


\begin{pf}
	Firstly, we need to show that for any $\cF \in \wbSets$, the assignment $\hcF$ can be extended to a functor. But this is clear by using that given any map $f:X\to Y$ in $\CH$, we have a map of their respective free resolutions by Corollary \ref{mapfreeresolutions}, hence we can define $\hcF(f):\hcF(Y)\to \hcF(X)$. One easily shows that this construction is compatible with the identity maps and the composition. 
	
	Now, in order to define the extension functor $\ex$, we need to show that any natural transformation $\tau:\cF\to \cG$ between functors $\cF, \cG$ in $\wbSets$ gives rise to a natural transformation $\widehat{\tau}:\hcF\to \hcG$. For any $X$ in $\CH$, the map $ \widehat{\tau}_X:\hcF(X)\to \hcG(X)$ is the map given by the equalizer's universal property, applied to the diagram 	
	\[
	\xymatrix{
		{\hcF(X)} \ar[r] \ar@{.>}[d]_-{ \widehat{\tau}_X} & \cF(B(X)) \ar[d]_-{\tau_{B(X)}} \ar@<1ex>[r] \ar@<-1ex>[r] & {\cF(\BB(X))} \ar[d]_-{{\tau_{\BB (X)}}}   \\
		{\hcG(X)} \ar[r] & \cG(B(X)) \ar@<1ex>[r] \ar@<-1ex>[r] & {\cG(\BB(X))} } 
     \]
But, for any map $f:X\to Y$ in $\CH$, it is straightforward to show that $\hcG(f) \circ \widehat{\tau}_X = \widehat{\tau}_Y \circ \hcF(f)$, which proves that $\widehat{\tau}$ is indeed a natural transformation. 	
\end{pf}

In order to prove the main theorem, we will make use of the following technical key lemma.

\begin{lem}\label{clau}
	Let $\cC$ be any category and $A, B, C \in \textrm{Ob}(\mathbf{C})$ together with morphisms 
	\[
	\xymatrix@1{ C \ar@<1ex>[r]^-{p_1} \ar@<-1ex>[r]_-{p_2} & B \ar@<1ex>[r]^-{f} & A \ar@<1ex>[l]^-{g}}
	\] such that $f \circ p_1 = f \circ p_2$ and $f \circ g = id_A$. Suppose  that there exists a morphism $k: B \to C$ such that the following diagram
	$$
	\xymatrix{
		B \ar@/_/[ddr]_{{g \circ f}} \ar@/^/[drr]^{id_B} \ar[dr]^k \\
		& C \ar[d]_-{p_2} \ar[r]^-{p_1} & B \ar[d]^-{f} \\
		& B \ar[r]_-{f} & A	
	}
	$$
	commutes. Then, for any contravariant functor $F: \cC \to \Sets$ one has that 
	$$
	\xymatrix@1{
		F(A) \ar[r]^-{F(f)} & F(B) \ar@<1ex>[r]^-{F(p_1)} \ar@<-1ex>[r]_-{F(p_2)} & F(C)
	}
	$$
	is an equalizer. 

\end{lem}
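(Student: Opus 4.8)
The plan is to recognize the displayed configuration as a \emph{split fork} (a split coequalizer diagram) and to exploit that such forks are \emph{absolute}: applying the contravariant functor $F$ reverses every arrow and every composite, turning the split coequalizer fork into a split equalizer fork, which is automatically an equalizer. Concretely, I would first translate the four hypotheses through $F$, remembering that $F$ reverses composition. The equalities $f\circ p_1=f\circ p_2$ and $f\circ g=\id_A$ become
\[ F(p_1)\circ F(f)=F(p_2)\circ F(f),\qquad F(g)\circ F(f)=\id_{F(A)}, \]
while the two relations $p_1\circ k=\id_B$ and $p_2\circ k=g\circ f$ read off from the triangle become
\[ F(k)\circ F(p_1)=\id_{F(B)},\qquad F(k)\circ F(p_2)=F(f)\circ F(g). \]

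From here the argument is a direct verification in $\Sets$. The equalizer of $F(p_1),F(p_2)\colon F(B)\rightrightarrows F(C)$ is the subset $E:=\{x\in F(B): F(p_1)(x)=F(p_2)(x)\}$ with its inclusion into $F(B)$, so it suffices to show that $F(f)\colon F(A)\to F(B)$ is a bijection onto $E$. The first displayed identity says $F(f)$ equalizes $F(p_1)$ and $F(p_2)$, hence factors through $E$; the second says $F(f)$ is a split monomorphism with retraction $F(g)$, hence injective. It remains only to prove surjectivity onto $E$. Given $x\in E$, I would set $a:=F(g)(x)$ and compute
\[ F(f)(a)=\bigl(F(f)\circ F(g)\bigr)(x)=\bigl(F(k)\circ F(p_2)\bigr)(x)=\bigl(F(k)\circ F(p_1)\bigr)(x)=x, \]
where the second equality is the last displayed relation, the third uses the defining condition $F(p_2)(x)=F(p_1)(x)$ of $E$, and the last uses $F(k)\circ F(p_1)=\id_{F(B)}$.

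Thus $F(g)$ restricts to a two-sided inverse of $F(f)\colon F(A)\to E$ (surjectivity from the computation, injectivity already noted), so $F(f)$ is a bijection onto $E$; that is, $\bigl(F(A),F(f)\bigr)$ is the equalizer of $F(p_1),F(p_2)$, as claimed. Since no step uses anything beyond the four identities above, the same reasoning shows abstractly that a split fork is an absolute equalizer. I do not expect a genuine obstacle here: the only delicate points are keeping the variance straight when passing through the contravariant $F$ (every arrow and every composite reverses) and invoking the relation $F(p_1)(x)=F(p_2)(x)$ defining $E$ at exactly the right moment in the surjectivity computation.
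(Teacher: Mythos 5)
Your proof is correct and follows essentially the same route as the paper's: both read off the split-fork identities $p_1\circ k=\id_B$, $p_2\circ k=g\circ f$, $f\circ g=\id_A$, apply the contravariant $F$, and verify that $F(g)$ (restricted to the equalizer) inverts $F(f)$ via the same chain of equalities through $F(k)\circ F(p_2)=F(k)\circ F(p_1)=\id_{F(B)}$. The only difference is cosmetic: you argue with elements of the equalizer subset of $F(B)$ in $\Sets$, while the paper phrases the identical computation via the universal property of the equalizer and cancellation of the monomorphism $i_E$.
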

\begin{pf}
	Denote by $i_E : E \hookrightarrow F(B)$ the equalizer of
	$F(p_i): F(B) \rightrightarrows F(A)$ for $i=1,2$. 
	 As $F(p_1) \circ F(f) = F(p_2) \circ F(f)$, we have a unique morphism $\hat{f}: F(A) \to E$ such that  
	$$
	\xymatrix{
		F(A) \ar[d]_-{{\hat{f}}} \ar[dr]^-{F(f)} \\
		E \ar@{^{(}->}[r]_-{i_E} & F(B) \ar@<1ex>[r]^-{F(p_1)} \ar@<-1ex>[r]_-{F(p_2)} & F(C)
	}
	$$
	commutes. Define $\hat{g} := F(g) \circ i_E$. We will show that $\hat{g}$ is the inverse of $\hat{f}$.  
	
	On one hand, 
	$$
	\hat{g} \circ \hat{f} = F(g) \circ i_E \circ \hat{f} = F(g) \circ F(f) = F(f \circ g) = id_{F(A)}.
	$$
	
	On the other, 
	\begin{align*}
		i_E \circ \hat{f} \circ \hat{g} & = F(f) \circ F(g) \circ i_E = F(g \circ f) \circ i_E = \\
		& = F(p_2 \circ k) \circ i_E = F(k) \circ F(p_2) \circ i_E = \\
		& = F(k) \circ F(p_1) \circ i_E = F(p_1 \circ k) \circ i_E = id_{F(B)} \circ i_E = i_E.
	\end{align*} 
	As $i_E$ is a monomorphism and $i_E \circ \hat{f} \circ \hat{g} = i_E \circ id_E$ we get that $\hat{f} \circ \hat{g} = id_E$.  
\end{pf}

\begin{prop}\label{lemaclau:Bf}
	Let $f: Y \to X$ be an epimorphism in $\CH$.Let $\cF$ be a functor in $\wbSets$. Then the natural map gives a bijection 
	$$
	\cF(B(X)) \cong \eq\left( \xymatrix@1{\cF(B(Y)) \ar@<1ex>[r]^-{\cF Bp_1} \ar@<-1ex>[r]_-{\cF Bp_2} & {\cF(B(Y \times_X Y))}}\right)
	$$
\end{prop}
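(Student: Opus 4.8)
The plan is to deduce the statement directly from the key Lemma~\ref{clau}, applied to the contravariant functor $\cF$ and to the diagram
\[
\xymatrix@1{ B(Y\times_X Y) \ar@<1ex>[r]^-{Bp_1} \ar@<-1ex>[r]_-{Bp_2} & B(Y) \ar@<1ex>[r]^-{Bf} & B(X) \ar@<1ex>[l]^-{g}}
\]
so that in the notation of that lemma $C=B(Y\times_X Y)$, $B=B(Y)$, $A=B(X)$, with $p_i=Bp_i$ and $f=Bf$. Observe that $\cF$ is only ever evaluated on $\beta$-sets, so the mere hypothesis $\cF\in\wbSets$ (that $\cF$ is defined on $\bSets$) is what is used; the coproduct condition plays no role. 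The identity $Bf\circ Bp_1=Bf\circ Bp_2$ is immediate from functoriality of $B$ together with $f\circ p_1=f\circ p_2$, the defining relation of the fiber product. Hence the only data of Lemma~\ref{clau} still to be produced are the section $g$ and the morphism $k$, after which the lemma delivers the equalizer, and therefore the asserted bijection, for free.

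First I would construct $g$. Since $f$ is an epimorphism, Proposition~\ref{Bprop} shows that $Bf:B(Y)\to B(X)$ is an epimorphism onto the $\beta$-set $B(X)=\beta|X|$, and Lemma~\ref{betaex} then yields a continuous section $g:B(X)\to B(Y)$ with $Bf\circ g=\id_{B(X)}$, which is exactly the relation $f\circ g=\id_A$ demanded by the lemma.

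The substantial step is the construction of $k:B(Y)\to B(Y\times_X Y)$ with $Bp_1\circ k=\id_{B(Y)}$ and $Bp_2\circ k=g\circ Bf$. Because $B(Y)=\beta|Y|$ is the free compact Hausdorff space on $|Y|$, it suffices to prescribe, for each $y\in|Y|$, an ultrafilter $k(\iota_Y(y))$ on $|Y|\times_{|X|}|Y|$ and then extend by the universal property; moreover, since two continuous maps out of $B(Y)$ agreeing on the dense image of $\iota_Y$ coincide, both required identities need only be checked on principal ultrafilters. For fixed $y$, set $\mathcal V_y:=g(\iota_X(f(y)))\in\beta|Y|$. Applying $Bf\circ g=\id_{B(X)}$ gives $Bf(\mathcal V_y)=\iota_X(f(y))$, the principal ultrafilter at $f(y)$; reading off the definition of $Bf$ as the pushforward of ultrafilters forces $f^{-1}(f(y))\in\mathcal V_y$. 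Now the fiber $p_1^{-1}(y)$ equals $\{y\}\times f^{-1}(f(y))$ and $p_2$ restricts to a bijection $p_1^{-1}(y)\xrightarrow{\sim}f^{-1}(f(y))$; transporting $\mathcal V_y$ (restricted to $f^{-1}(f(y))$) back along this bijection produces an ultrafilter $k(\iota_Y(y))$ concentrated on $p_1^{-1}(y)$. By construction its $p_1$-pushforward is principal at $y$ and its $p_2$-pushforward is $\mathcal V_y$; extending and invoking density then gives precisely $Bp_1\circ k=\id_{B(Y)}$ and $Bp_2\circ k=g\circ Bf$.

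With $g$ and $k$ in hand all hypotheses of Lemma~\ref{clau} hold, so the lemma shows that $\cF(B(X))\xrightarrow{\cF(Bf)}\cF(B(Y))\rightrightarrows\cF(B(Y\times_X Y))$ is an equalizer, which is exactly the claimed bijection. The main obstacle is the construction of $k$: unlike $g$, it cannot be obtained from projectivity alone, for a lift of the evident map to $Y\times_X Y$ along $\xi_{Y\times_X Y}$ would satisfy the two identities only after composing with $\xi_Y$, which is not a monomorphism. One is therefore driven to the explicit ultrafilter coupling above; what makes it tractable is that the condition $Bp_1\circ k=\id$ forces the first marginal to be principal, collapsing the coupling onto the single fiber $p_1^{-1}(y)$ and reducing it to a transport of $\mathcal V_y$.
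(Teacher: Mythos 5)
Your proof is correct, and its skeleton is the one the paper uses: reduce to Lemma~\ref{clau} applied to $B(Y\times_X Y)\rightrightarrows B(Y)\to B(X)$, for which the whole content is producing the section $g$ and the map $k$. Where you diverge is in how those two maps are built, and the divergence makes your argument considerably heavier than it needs to be. The paper chooses $g$ not by projectivity (Lemma~\ref{betaex}) but as $\beta g_0$ for a set-theoretic section $g_0:|X|\to|Y|$ of $|f|$, obtained from the axiom of choice; then $Bf\circ \beta g_0=\beta(|f|\circ g_0)=\id$. With that choice, $k$ comes for free: since $|Y\times_X Y|=|Y|\times_{|X|}|Y|$ (the forgetful functor preserves fiber products, as recalled in the preliminaries), the set map $y\mapsto(y,g_0(f(y)))$ lands in the fiber product, and applying $\beta$ to it yields $k$ satisfying $Bp_1\circ k=\id$ and $Bp_2\circ k=\beta g_0\circ Bf$ purely by functoriality of $\beta$. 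Your closing remark --- that $k$ cannot be extracted from projectivity and that one is ``driven to'' an explicit ultrafilter coupling --- is accurate only relative to your choice of $g$ as an arbitrary continuous section: an arbitrary section from Lemma~\ref{betaex} need not be of the form $\beta(\text{set map})$, and that is exactly what forces you to define $k$ on principal ultrafilters by transporting $g(\iota_X(f(y)))$ onto the fiber $p_1^{-1}(y)$ and then invoke density and the Hausdorff property. That coupling argument is correct as written (the key points --- that $Bf(\mathcal V_y)$ principal forces $f^{-1}(f(y))\in\mathcal V_y$, and that agreement on the dense image of $\iota_Y$ suffices --- both check out), so your proof stands; it just trades the paper's one-line ``do everything in $\Sets$ and apply $\beta$'' device for a hands-on construction. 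What your route buys is independence from the identification $|Y\times_X Y|=|Y|\times_{|X|}|Y|$ and a concrete picture of the ultrafilters involved; what the paper's route buys is brevity and the complete disappearance of the ``substantial step.''
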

\begin{pf}
	We only need to show that 
		$$
	\xymatrix@1{
		{B(Y \times_X Y)} \ar@<1ex>[r]^-{Bp_1} \ar@<-1ex>[r]_-{Bp_2} & B(Y) \ar[r]^-{Bf} & B(X).
	}
	$$
	satisfies the conditions of the key lemma \ref{clau}. 
	
	First, as $|f| : |Y| \to |X|$ is surjective, and by the axiom of choice, there exist a section $g: |X| \to |Y|$ such that $f \circ g = \id_{|X|}$. Consider then $\beta g: B(X) \to B(Y)$ which satisfies
	$$
	B(f) \circ \beta g = \beta (|f| \circ g) = id_{B(X)}.
	$$
	
	Now, there exist a unique map $k: |Y| \to |Y| \times_{|X|} |Y|$ such that 
	$$
	\xymatrix{
		|Y| \ar@/_/[ddr]_{{g \circ |f|}} \ar@/^/[drr]^{id} \ar[dr]^k \\
		& {|Y| \times_{|X|} |Y|} \ar[r]_-{|p_1|} \ar[d]_-{|p_2|} & |Y| \ar[d]^-{|f|} \\
		& |Y| \ar[r]_-{|f|} & |X|
	}
	$$
	commutes. Applying the  $\beta$ functor we get a commutative diagram 
	$$
	\xymatrix{
		B(Y) \ar@/_/[ddr]_{{\beta g \circ Bf}} \ar@/^/[drr]^{id} \ar[dr]^{\beta k} \\
		& {B(Y \times_X Y)} \ar[r]_-{B p_1} \ar[d]_-{Bp_2} & B(Y) \ar[d]^-{Bf} \\
		& B(Y) \ar[r]_-{Bf} & B(X)
	}
	$$
as we wanted. 	
\end{pf}

The following proposition shows that the functor $\ex\circ \res$ is naturally isomorphic to the identity.

\begin{prop} \label{exres}
	Let $\cF$ be a functor in $\wbSets$, and let $S$ be a set in $\Sets$. Then the natural map induces a bijection 
	$$
	\cF(\beta S) \cong \hcF(\beta S) = \eq \left( \cF(B(\beta S)) \rightrightarrows \cF(\BB (\beta S)) \right)
	$$
\end{prop}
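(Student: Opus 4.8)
The plan is to apply the key lemma \ref{clau} directly to the standard free resolution of $\beta S$. Concretely, I take in \ref{clau} the category $\cC = \bSets$, the functor $F = \cF$, and the objects and arrows $A = \beta S$, $B = B(\beta S)$, $C = \BB(\beta S)$, $f = \xi_{\beta S}\colon B(\beta S)\to \beta S$, and $p_i = \pi_i\colon \BB(\beta S)\rightrightarrows B(\beta S)$. All three objects are genuinely $\beta$-sets (since $B(\beta S) = \beta|\beta S|$ and $\BB(\beta S) = B(\widetilde B)$ with $\widetilde B = B(\beta S)\times_{\beta S} B(\beta S)$), so $\cF$ is defined on the whole diagram. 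The map $\cF(\xi_{\beta S})\colon \cF(\beta S)\to \cF(B(\beta S))$ equalizes $\cF(\pi_1)$ and $\cF(\pi_2)$ because $\xi_{\beta S}\circ \pi_1 = \xi_{\beta S}\circ \pi_2$, and the induced map into the equalizer is exactly the natural map $\cF(\beta S)\to \hcF(\beta S)$ under consideration. Since \ref{clau} asserts that $\cF(\beta S)\to \cF(B(\beta S))\rightrightarrows \cF(\BB(\beta S))$ is an equalizer, and $\hcF(\beta S)$ is by definition this equalizer, the desired bijection follows immediately once the hypotheses of \ref{clau} are checked.

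Two of those hypotheses are easy. The identity $f\circ p_1 = f\circ p_2$ holds because $\pi_i = p_i\circ \xi_{\widetilde B}$ and $\xi_{\beta S}\circ p_1 = \xi_{\beta S}\circ p_2$ by construction of the fiber product $\widetilde B$. For the section $g$ with $f\circ g = \id_{\beta S}$, I use that $\beta S$ is a $\beta$-set and that $\xi_{\beta S}$ is an epimorphism of compact Hausdorff spaces (its composite with $\iota_{|\beta S|}$ is $j_{\beta S}$, which is surjective, so Lemma \ref{epibetaCH} applies); Lemma \ref{betaex} then provides a continuous section $s\colon \beta S\to B(\beta S)$ with $\xi_{\beta S}\circ s = \id_{\beta S}$, and I set $g = s$.

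The remaining and main task is to produce the morphism $k\colon B(\beta S)\to \BB(\beta S)$ required by \ref{clau}, that is, with $\pi_1\circ k = \id_{B(\beta S)}$ and $\pi_2\circ k = s\circ \xi_{\beta S}$. I would construct it in two steps. First, the two maps $\id_{B(\beta S)}$ and $s\circ \xi_{\beta S}$ from $B(\beta S)$ to $B(\beta S)$ agree after composition with $\xi_{\beta S}$ (both equal $\xi_{\beta S}$, using $\xi_{\beta S}\circ s = \id$), so the universal property of $\widetilde B$ gives a unique $m\colon B(\beta S)\to \widetilde B$ with $p_1\circ m = \id_{B(\beta S)}$ and $p_2\circ m = s\circ \xi_{\beta S}$. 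Second, I lift $m$ along the epimorphism $\xi_{\widetilde B}\colon \BB(\beta S) = B(\widetilde B)\to \widetilde B$ to obtain $k$ with $\xi_{\widetilde B}\circ k = m$. This lifting uses the projectivity of the $\beta$-set $B(\beta S) = \beta|\beta S|$ and is carried out exactly as in Lemma \ref{betaex}: choose set-theoretically a lift of $m\circ \iota_{|\beta S|}$ along the surjection $\xi_{\widetilde B}$ (continuous since the domain $|\beta S|$ is discrete) and extend it by the universal property of the Stone-\v Cech compactification, the two resulting continuous maps agreeing on the dense image of $\iota_{|\beta S|}$. Then $\pi_i\circ k = p_i\circ \xi_{\widetilde B}\circ k = p_i\circ m$ yields the two required identities.

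With $f$, $g$, the $p_i$ and $k$ all in place, every hypothesis of Lemma \ref{clau} is satisfied, and its conclusion gives the bijection $\cF(\beta S)\cong \hcF(\beta S)$. I expect the construction of $k$ to be the only genuine obstacle: one must lift not merely to the fiber product $\widetilde B$ but to its free cover $\BB(\beta S)$, and the passage from $m$ to $k$ is precisely where the extremal disconnectedness (projectivity) of $\beta$-sets enters; everything else is a formal diagram chase.
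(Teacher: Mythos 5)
Your proof is correct and follows essentially the same route as the paper: apply the key Lemma \ref{clau} to the standard free resolution of $\beta S$, taking the section $g$ of $\xi_{\beta S}$ from Lemma \ref{betaex}. The only immaterial difference is that the paper produces the lift $k\colon B(\beta S)\to \BB(\beta S)$ by invoking Proposition \ref{betafiberproduct}, whereas you rebuild it by hand via the universal property of the fiber product followed by the projectivity argument of Lemma \ref{betaex}.
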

\begin{proof}
	We will show that the conditions of the key lemma \ref{clau} are verified. Consider
	$$
	\xymatrix@1{
		{\BB (\beta S)} \ar@<1ex>[r] \ar@<-1ex>[r] & {B( \beta S)} \ar[r]^-{{\xi_{\beta S}}} & {\beta S} 
	}
	$$
 Denote by $g$ a section of $\xi_{\beta S}$ by using Lemma \ref{betaex}.

	Moreover, the diagram
	$$
	\xymatrix{
		{B(\beta S)} \ar[d]_-{g \circ \xi_{\beta S}} \ar[r]^-{id} & {B(\beta S)} \ar[d]^-{\xi_{\beta S}} \\
		{B(\beta S)} \ar[r]_-{\xi_{\beta S}} & {\beta S}
	}
	$$
	commutes. Hence, by the Proposition \ref{betafiberproduct}, there exists a continuous map  $k : B(\beta S) \to \BB (\beta S)$ making the following diagram commutative:
	$$
	\xymatrix{
		B(\beta S) \ar@/_/[ddr]_{{g \circ \xi_{\beta S}}} \ar@/^/[drr]^{id} \ar[dr]^{k} \\
		& {\BB (\beta S)} \ar[r] \ar[d] & B(\beta S) \ar[d]^-{\xi_{\beta S}} \\
		& B(\beta S)\ar[r]_-{\xi_{\beta S}} & \beta S
	}
	$$
\end{proof}

From now on we will identify $\cF$ and $\hcF$ when applied to a $\beta$-set, for any $ \cF$ in $\wbSets$.

The following proposition shows that the extension of a functor in $\wbSets$ always verifies the $(\star)$-property (\ref{star}).

\begin{prop}\label{extensionverifiesstar}
	Let $f: Y \to X$ be an epimorphism in $\CH$ and let  $\cF$ be a functor in $\wbSets$. Then 
$$
\hcF(X) \to \hcF (Y) \rightrightarrows \hcF (Y \times_X Y)
$$
is an equalizer. 
\end{prop}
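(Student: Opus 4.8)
The plan is to deduce property $(\star)$ for $\hcF$ from the property already established for $\cF\circ B$ in Proposition~\ref{lemaclau:Bf}, by an interchange of iterated equalizers. Each value of $\hcF$ is by definition an equalizer of the standard free resolution, so $\hcF(X),\hcF(Y),\hcF(Y\times_X Y)$ sit as the columns of the diagram of sets
$$
\xymatrix{
\hcF(X) \ar[d] \ar[r] & \hcF(Y) \ar[d] \ar@<1ex>[r] \ar@<-1ex>[r] & \hcF(Y\times_X Y) \ar[d] \\
\cF(B(X)) \ar@<1ex>[d] \ar@<-1ex>[d] \ar[r] & \cF(B(Y)) \ar@<1ex>[d] \ar@<-1ex>[d] \ar@<1ex>[r] \ar@<-1ex>[r] & \cF(B(Y\times_X Y)) \ar@<1ex>[d] \ar@<-1ex>[d] \\
\cF(\BB(X)) \ar[r] & \cF(\BB(Y)) \ar@<1ex>[r] \ar@<-1ex>[r] & \cF(\BB(Y\times_X Y))
}
$$
whose horizontal rows are the \v Cech data of $f$. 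All squares commute by naturality of the standard free resolution in the $\CH$-variable (Corollary~\ref{mapfreeresolutions}), and the columns are equalizers by the very definition of $\hcF$. Since in $\Sets$ an equalizer of equalizers may be computed in either order, I would compute the total iterated equalizer rows-first: it equals $\eq(\cF(B(X))\rightrightarrows\cF(\BB(X)))=\hcF(X)$ precisely when the two lower rows are equalizers. Thus the whole statement reduces to showing that the middle and bottom rows are equalizers, the columns-first computation then identifying this common value with $\eq(\hcF(Y)\rightrightarrows\hcF(Y\times_X Y))$, which is the assertion.

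The middle row is an equalizer immediately by Proposition~\ref{lemaclau:Bf} applied to $f$. The bottom row, i.e.\ that $\cF\circ\BB$ also satisfies $(\star)$, is where the real work lies, because $\BB$ does not carry the \v Cech nerve of $f$ to the \v Cech nerve of $\BB(f)$. To handle it I would write $\BB=B\circ\widetilde{B}$ and apply Proposition~\ref{lemaclau:Bf} instead to the epimorphism $\widetilde{B}(f)\colon\widetilde{B}(Y)\to\widetilde{B}(X)$ (an epimorphism by Proposition~\ref{B2prop}), obtaining
$$
\cF(\BB(X))\cong\eq\Big(\cF(\BB(Y))\rightrightarrows\cF\big(B(\widetilde{B}(Y)\times_{\widetilde{B}(X)}\widetilde{B}(Y))\big)\Big),
$$
with the two maps induced by the projections $q_1,q_2$ of the genuine fibre product $\widetilde{B}(Y)\times_{\widetilde{B}(X)}\widetilde{B}(Y)$.

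It then remains to replace this fibre product by $\widetilde{B}(Y\times_X Y)$. I would introduce the canonical comparison map $\Phi\colon\widetilde{B}(Y\times_X Y)\to\widetilde{B}(Y)\times_{\widetilde{B}(X)}\widetilde{B}(Y)$ characterised by $q_i\circ\Phi=\widetilde{B}(p_i)$, and prove that it is surjective: on underlying sets a point of the target consists of two compatible pairs of ultrafilters, and by Lemma~\ref{Bxiandiota} their $\xi$-limits are forced to have the shape $(y,y')$ for a single point $(y,y')\in Y\times_X Y$; lifting each compatible pair to an ultrafilter on $|Y\times_X Y|=|Y|\times_{|X|}|Y|$ (compatible ultrafilters on a set-theoretic fibre product always lift) produces two ultrafilters that automatically share the limit $(y,y')$ and hence define a point of $\widetilde{B}(Y\times_X Y)$ mapping to the given one. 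Granting surjectivity, $\Phi$ is an epimorphism in $\CH$, hence so is $B(\Phi)$ by Proposition~\ref{Bprop}; since its target is a $\beta$-set, $B(\Phi)$ admits a section by Lemma~\ref{betaex}, so $\cF(B(\Phi))$ is a split monomorphism, in particular injective. As $\cF(\BB(p_i))=\cF(B(\Phi))\circ\cF(B(q_i))$, this injectivity forces the equalizer along the maps $\cF(\BB(p_i))$ to coincide with the one along the projections $\cF(B(q_i))$, so the bottom row is an equalizer and the interchange argument concludes. I expect this surjectivity of $\Phi$ — precisely the fact that the $\xi$-limits of the lifted ultrafilters are automatically forced to agree — together with the splitting provided by Lemma~\ref{betaex}, to be the main obstacle, since it is exactly the point where the failure of $\BB$ to preserve \v Cech nerves is repaired.
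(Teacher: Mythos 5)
Your argument is correct, but it takes a genuinely different route from the paper's. The paper proves the statement by a direct element chase on $\tilde f:\hcF(X)\to\eq(\hcF(Y)\rightrightarrows\hcF(Y\times_XY))$: injectivity because $\cF(\xi_Y)\circ\hcF(f)=\cF(Bf)\circ\cF(\xi_X)$ is a composite of injections (Proposition~\ref{lemaclau:Bf} for $\cF(Bf)$, the equalizer inclusion for $\cF(\xi_X)$), and surjectivity by transporting a class $c$ along $\cF(\beta g)$ for a set-theoretic section $g$ of $|f|$ and checking that the result lies in $\hcF(X)$ using that $\cF(\BB f)$ is injective (split via Lemma~\ref{betaex}). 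You instead run a Fubini/interchange argument on the $3\times 3$ grid, which correctly reduces everything to descent along $f$ for $\cF\circ B$ (exactly Proposition~\ref{lemaclau:Bf}) and for $\cF\circ\BB$; the latter is genuinely new content not needed in the paper's proof, and your treatment of it — Proposition~\ref{lemaclau:Bf} applied to the epimorphism $\widetilde B(f)$, surjectivity of the comparison $\Phi:\widetilde B(Y\times_XY)\to\widetilde B(Y)\times_{\widetilde B(X)}\widetilde B(Y)$, and the splitting of $B(\Phi)$ from Lemma~\ref{betaex} — is sound, including the observation that the $\xi$-limits of the lifted ultrafilters agree automatically. The one assertion you leave unproved is that compatible ultrafilters lift to the set-theoretic fibre product, i.e.\ that $\beta(S\times_US')\to\beta S\times_{\beta U}\beta S'$ is onto; this is true (for $A\in\mu$, $A'\in\mu'$ the sets $(A\times A')\cap(S\times_US')$ generate a proper filter, since $u(A)\cap u'(A')$ belongs to the common pushforward ultrafilter and is therefore nonempty), but it deserves a proof, as it is precisely the kind of point-set statement the paper is otherwise careful to verify. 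What your approach buys is a more structural proof and the stronger intermediate fact that $\cF\circ\BB$ also satisfies property $(\star)$ along $f$; what the paper's buys is brevity, since it never has to analyse $\BB$ applied to the \v Cech nerve of $f$.
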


\begin{pf}
Consider the following commutative diagram
$$
\xymatrix{
	{\hcF(X)} \ar[d]_-{\hcF(f)} \ar@{^{(}->}[r]^-{{\cF(\xi_X)}} & \cF(B(X)) \ar@<1ex>[r]^-{{\cF(\tilde{p}_1)}} \ar@<-1ex>[r]_-{{\cF(\tilde{p}_2)}} \ar[d]_-{\cF(Bf)} & {\cF(\BB (X))} \ar[d]^-{{\cF(\BB f)}} \\
	{\hcF(Y)} \ar@<1ex>[d]^-{{\hat{p}_2}} \ar@<-1ex>[d]_-{{\hat{p}_1}} \ar@{^{(}->}[r]^-{{\cF(\xi_Y)}} & \cF(B(Y)) \ar@<1ex>[d]^-{{\cF(B(q_2))}} \ar@<-1ex>[d]_-{{\cF(B(q_1))}} \ar@<1ex>[r]^-{{\cF(\tilde{q}_1)}} \ar@<-1ex>[r]_-{{\cF(\tilde{q}_2)}} & {\cF(\BB (Y))} \ar@<1ex>[d] \ar@<-1ex>[d] \\
	{\hcF(Y \times_X Y)} \ar@{^{(}->}[r]^-{{\hat{i}}} & {\cF(B(Y \times_X Y))} \ar@<1ex>[r] \ar@<-1ex>[r] & {\cF(\BB (Y \times_X Y))}  
}
$$
We denote by $E := \eq \left( \hcF(Y) \rightrightarrows \hcF(Y \times_X Y) \right)$, and the map $\tilde{f}:\hcF(X) \to E$ given by $\hcF(f)$. 

Firstly, we show that $\hcF(f)$ is injective.  By lemma \ref{Bxiandiota}, $\cF( \xi_Y) \circ \hcF(f) = \cF(Bf) \circ \cF(\xi_X)$. Hence, if $a, b \in \hcF(X)$ verify that $\hcF(f)(a) = \hcF(f)(b)$, then 
\begin{multline*}
	(\cF(Bf) \circ \cF(\xi_X))(a) = (\cF(\xi_Y) \circ \hcF(f))(a) =\\ = (\cF(\xi_Y) \circ \hcF(f))(b)   = (\cF(Bf) \circ \cF(\xi_X))(b).	
\end{multline*}
Now, $\cF(Bf) \circ \cF(\xi_X)$ is injective, since $\cF(Bf)$ is injective by Lemma \ref{lemaclau:Bf}, as $F(B(X)) \to F(B(Y)) \rightrightarrows F(B(Y \times_X Y))$ is an equalizer, and $\cF(\xi_X)$ is also injective as $\hcF(X)$ is also an equalizer by construction. Thus, $a=b$ and $\tilde{f}$ is injective. 

To show that $\tilde{f}$ is surjective, consider $c\in E\subset \hcF(Y) \subset \cF(B(Y))$. We are going to construct a preimage $c'\in \hcF(X)$ by constructing a $d \in \cF(B(X))$ such that $\cF(\tilde{p}_1)(d)=\cF(\tilde{p}_2)(d)$ using that the first row of the diagram is an equalizer by Proposition \ref{exres}. 

The existence of a $d \in \cF(B(X))$ such that $\cF(B(f))(d)=\cF(\xi_Y)(c)$ uses that the second column of the diagram is also an equalizer by Proposition \ref{lemaclau:Bf}. As $c$ is in the equalizer of $\hat p_1$ and $\hat p_2$, we have that $\hat p_1(c)=\hat p_2(c)$, and therefore \[\cF(B(q_1))(\cF(\xi_Y)(c))=\hat \iota (\hat p_1(c))=\hat \iota (\hat p_2(c))=\cF(B(q_2))(\cF(\xi_Y)(c)).\]
Hence $\cF(\xi_Y)(c)$ is in the equalizer of $\cF(B(q_1))$ and $\cF(B(q_2))$, proving the existence of such $d$.

The main difficulty now is to show that such $d$ verifies that $\cF(\tilde{p}_1)(d)=\cF(\tilde{p}_2)(d)$. In order to show this, we will construct some useful maps using corollary \ref{proj}. First, as the map $B(f): B(Y)\to B(X)$ is an epimorphism, and in fact it has a section, we can construct a continuos map $u_i:\BB(X)\to B(Y)$ such that $\tilde{p_i}=B(f)\circ u_i$, for $i=1,2$. Therefore, \begin{align*}
\cF(\tilde{p}_i)(d)=\cF(B(f)\circ u_i)(d)& =\cF(u_i)\cF(B(f))(d)\\ &=\cF(u_i)\cF(\xi_Y)(c) =\cF(\xi_Y\circ u_i)(c)    
\end{align*}
for $i=1,2$. 

Using that \[f\circ \xi_Y\circ u_1=\xi_X\circ B(f)\circ u_1= \xi_X \circ\tilde{p_1}=\xi_X \circ\tilde{p_2}=f\circ \xi_Y\circ u_2,\]  there exists a unique continuous map $u: \BB(X)\to Y\times_X Y$ such that $q_i\circ u=\xi_Y\circ u_i$ for $i=1,2$. Using again the projectivity of $\BB(X)$, and the fact that $\xi_{Y\times_XY}$ is an epimorphism, the map $u$ lifts to a map $v:\BB(X)\to B(Y\times_X Y)$ such that $\xi_{Y\times_XY}\circ v=u$. Then, for any $i=1,2$, \[\xi_Y\circ u_i=q_i\circ u=q_i\circ \xi_{Y\times_XY}\circ v=\xi_Y\circ B(q_i)\circ v,\]
and so
\begin{align*} 
	\cF(\xi_Y\circ u_i)(c)& =\cF(\xi_Y\circ B(q_i)\circ v)(c) =\cF( B(q_i)\circ v)(\cF(\xi_Y)(c))\\ & =\cF( B(q_i)\circ v)(\cF(B(f))(d))=\cF(B(f)\circ B(q_i)\circ v)(d).
\end{align*}
But $q_1\circ f=q_2\circ f$ by definition of the fibre product, so this last element is independent of $i$. Consequently, $\tilde{f}$ is surjective.  
\end{pf}

\begin{prop}\label{resex}
	Let $\cF$ be a functor in $\wCH_{\star}$, and let $X$ be a compact Hausdorff space. Then the natural map gives a bijection
	$$
	F(X) \cong \hcF (X) = \eq \left( \cF(B(X)) \rightrightarrows \cF(\BB (X))\right).
	$$
\end{prop}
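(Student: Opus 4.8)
The plan is to reduce the claim to two applications of property $(\star)$. Write $\widetilde B(X) := B(X)\times_X B(X)$, with projections $p_1,p_2$, so that $\BB(X)=B(\widetilde B(X))$ and $\pi_i = p_i\circ \xi_{\widetilde B(X)}$, where $\xi_{\widetilde B(X)}:\BB(X)\to \widetilde B(X)$. Since $\xi_X:B(X)\to X$ is a surjection of compact Hausdorff spaces it is an epimorphism in $\CH$, so property $(\star)$ applies to it and yields a bijection
$$
\cF(X) \cong \eq\left( \cF(B(X)) \rightrightarrows \cF(\widetilde B(X)) \right),
$$
with the two maps being $\cF(p_1),\cF(p_2)$ and with the composite $\cF(X)\to \cF(B(X))$ equal to $\cF(\xi_X)$. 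On the other hand, $\hcF(X)=\eq(\cF(B(X))\rightrightarrows \cF(\BB(X)))$ by definition, the two maps now being $\cF(\pi_1),\cF(\pi_2)$. Since both equalizers are realized as subsets of $\cF(B(X))$, the proof reduces to showing that they coincide, and that the natural map $\cF(X)\to\hcF(X)$ is exactly the bijection furnished by the first equalizer.

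The key point I would isolate is that $\cF(\xi_{\widetilde B(X)})$ is injective. This is itself an instance of $(\star)$: the map $\xi_{\widetilde B(X)}:\BB(X)=B(\widetilde B(X))\to \widetilde B(X)$ is an epimorphism, so $(\star)$ exhibits $\cF(\widetilde B(X))$ as the equalizer of a pair of maps out of $\cF(\BB(X))$, the equalizer inclusion being precisely $\cF(\xi_{\widetilde B(X)})$. As equalizer inclusions are monomorphisms, $\cF(\xi_{\widetilde B(X)})$ is injective.

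Granting this, the comparison of the two equalizers is formal. By contravariance $\cF(\pi_i)=\cF(\xi_{\widetilde B(X)})\circ \cF(p_i)$, so any $a\in\cF(B(X))$ with $\cF(p_1)(a)=\cF(p_2)(a)$ automatically satisfies $\cF(\pi_1)(a)=\cF(\pi_2)(a)$, giving one inclusion. Conversely, if $\cF(\pi_1)(a)=\cF(\pi_2)(a)$, that is $\cF(\xi_{\widetilde B(X)})(\cF(p_1)(a))=\cF(\xi_{\widetilde B(X)})(\cF(p_2)(a))$, then injectivity of $\cF(\xi_{\widetilde B(X)})$ forces $\cF(p_1)(a)=\cF(p_2)(a)$. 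Hence the two equalizers are the same subset of $\cF(B(X))$. Finally, because $\xi_X\circ p_1=\xi_X\circ p_2$ from the fiber-product construction, $\xi_X$ coequalizes $\pi_1,\pi_2$, so $\cF(\xi_X)$ factors through $\hcF(X)$; this factorization is the natural map of the statement, and it agrees with the bijection $\cF(X)\cong\eq(\cF(p_1),\cF(p_2))$ of the first step, whence it is a bijection.

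The main obstacle, and really the only subtle point, is recognizing this self-referential use of $(\star)$ to get injectivity of $\cF(\xi_{\widetilde B(X)})$: one must pass from the resolution level $\BB(X)$ back down to $\widetilde B(X)$, and without this injectivity the reverse inclusion of equalizers could fail. Everything else is diagram-chasing and contravariant functoriality, together with the fact that $\xi_Z:B(Z)\to Z$ is always an epimorphism.
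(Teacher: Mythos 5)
Your proposal is correct and follows essentially the same route as the paper's proof: apply $(\star)$ to the epimorphism $\xi_X$ to identify $\cF(X)$ with $\eq(\cF(B(X))\rightrightarrows\cF(B(X)\times_X B(X)))$, then apply $(\star)$ again to the epimorphism $\xi_{B(X)\times_X B(X)}$ to get injectivity of $\cF(\xi_{B(X)\times_X B(X)})$ and hence the identification of the two equalizers inside $\cF(B(X))$. If anything, you are slightly more explicit than the paper about both inclusions and about why the resulting bijection is the natural map.
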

\begin{pf}
	Consider the diagram 
	$$
	\xymatrix@1{
		{\BB (X)} \ar[r]^-{{\xi_{\widetilde{B}(X)}}} & {B(X) \times_X B(X)} \ar@<1ex>[r]^-{p_1} \ar@<-1ex>[r]_-{p_2} & B(X) \ar[r]^-{\xi_X} & X
	}.
	$$
	
	We denote by $E$ the equalizer $\eq \left( \cF(B(X)) \rightrightarrows \cF(B(X) \times_X B(X)) \right)$. Since $\cF$ verifies property $(\star)$, we have that $\cF(X) \cong E$ through the map $\cF(\xi_X): \cF(X) \to \cF(B(X))$. 
	
	As $\xi_{B(X) \times_X B(X)} : \BB (X) \to B(X) \times_X B(X)$ is an epimorphism, 	
	$$
	\xymatrix@1{
		{\cF(B(X) \times_X B(X))} \ar[r]^-{{\cF(\xi_{\widetilde{B}(X)})}} & {\cF(\BB (X))} \ar@<1ex>[r] \ar@<-1ex>[r] & {\cF(\BB (X) \times_{\widetilde{B}(X)} \BB(X))} 
	}
	$$
	is an equalizer, again because $\cF$ verifies property $(\star)$. In particular, $\cF(\xi_{\widetilde{B}(X)})$ is injective. 
	
	From the following commutative diagram
	$$
	\xymatrix@1{
		\cF(X) \ar[r]^-{\cF(\xi_X)} & \cF(B(X)) \ar@<1ex>[r]^-{\cF(p_1)} \ar@<-1ex>[r]_-{\cF(p_2)} & {\cF(B(X) \times_X B(X))} \ar[r]^-{\cF(\xi_{\widetilde{B}(X)})} & {\cF(\BB (X))}
	}
	$$
	we deduce that $\hcF(X) \subseteq E$ and we are done. 
\end{pf}

\begin{proof}[Proof of Theorem \ref{equivalence}]
In order to prove the theorem, we will see that the functors $\res$ and $\ex$ are well-defined and are inverses one of the other. The restriction functor is clearly well-defined, and the $\ex$ functor in lemma \ref{extension} lands inside $\wCH_{\star}$ because of proposition \ref{extensionverifiesstar}. 

Now, Propositions \ref{resex} and \ref{exres} can be used to show that $\ex\circ \res$ and $\res\circ \ex$ are isomorphic to their respective identity functors, which shows the result.
\end{proof}

\begin{proof}[Proof of Corollary \ref{equivalencestar}]
By using the proof of theorem \ref{equivalence}, we only need to show that, if $\cF $ is in $\wbSets$ and it sends coproducts to products, then $\hcF$ also sends coproducts to products. But corollary \ref{coproductfreeresolutions} shows that the free resolution of the coproduct of two compact Hausdorff spaces $X$ and $Y$ is 
\[
	\xymatrix{
		{\BB (X) \sqcup \BB (Y)} \ar@<1ex>[r]^-{{\tilde{p_1} \sqcup \tilde{q_1}}}  \ar@<-1ex>[r]_-{{\tilde{p_2} \sqcup \tilde{q_2}}} & {B(X) \sqcup B(Y)} \ar[r]_-{{\xi_X \sqcup \xi_Y}} & {X \sqcup Y} 
	}
\]
hence 
\[\hcF(X\sqcup Y):=\eq\left( 	\xymatrix{
		{\cF(B (X) \sqcup B (Y))} \ar@<1ex>[r]^-{{\cF(\tilde{p_1} \sqcup \tilde{q_1})}}  \ar@<-1ex>[r]_-{{\cF(\tilde{p_2} \sqcup \tilde{q_2})}} & {\cF(\BB(X) \sqcup \BB(Y))}}\right)\]
Using that  $\cF $ sends coproducts to products we get that 
\begin{multline*}
    \hcF(X\sqcup Y):=\\ =\eq\left( 	
			\xymatrix{ {\cF(B (X))\times \cF( B (Y))}
\ar@<1ex>[r]^-{{\cF(\tilde{p_1}) \times \cF(\tilde{q_1})}}  \ar@<-1ex>[r]_-{{\cF(\tilde{p_2}) \times \cF(\tilde{q_2})}} & {\cF(\BB(X))\times \cF(\BB(Y))} } 
		\right)= \\ = \hcF(X)\times \hcF(Y)
\end{multline*}

\end{proof}

\section{A construction of the condensed set associated to any presheaf on compact Hausdorff spaces}

Given any contravariant functor $\cF:\CH\to \Sets$, the general construction of the sheaf associated to a presheaf determines a condensed set $\cF^{\sharp}$; it gives a functor $\ ^{\sharp}:\wCH\to \cSets$, left adjoint to the inclusion $\cSets\to \wCH$. 

Firstly, observe that our result can be used to construct the condensed set associated to any functor $\cF$ in $\wCH_{\times}$. 

\begin{cor}
    Let $\cF$ be a functor from $\CH$ to $\Sets$ that sends finite coproducts to products. Then $\ex(\res(\cF))=\hcF$ is a condensed set. 
\end{cor}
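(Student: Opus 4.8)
The plan is to deduce this directly from Corollary \ref{equivalencestar}, which already records that the restriction functor restricts to an equivalence $\cSets \to \ncSets$ with quasi-inverse $\ex$; in particular $\ex$ sends every object of $\ncSets = \wbSets_{\times}$ into $\cSets$. So the whole task reduces to checking that the restricted presheaf $\res(\cF)\colon \bSets \to \Sets$ belongs to $\ncSets$, i.e. that it sends finite coproducts of $\beta$-sets to finite products.

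To verify this I would invoke Proposition \ref{BSetsCat}, which identifies the coproducts in $\bSets$ with the coproducts in $\CH$ and exhibits $\emptyset = \beta\emptyset$ as the initial object. For sets $S, T$ in $\Sets$ the object $\beta S \sqcup \beta T$ is then literally the coproduct computed in $\CH$, so, using that $\cF$ turns finite coproducts in $\CH$ into products, the natural map gives
$$\res(\cF)(\beta S \sqcup \beta T) = \cF(\beta S \sqcup \beta T) \cong \cF(\beta S) \times \cF(\beta T) = \res(\cF)(\beta S) \times \res(\cF)(\beta T).$$
Applying the same hypothesis to the empty coproduct shows that $\res(\cF)(\emptyset) = \cF(\emptyset)$ is a one-element set. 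Together these say exactly that $\res(\cF) \in \wbSets_{\times} = \ncSets$.

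Once this is in place, Corollary \ref{equivalencestar} (equivalently, Theorem \ref{equivalence} combined with the coproduct-preservation argument in the proof of that corollary) gives at once that $\ex(\res(\cF)) = \hcF$ lies in $\cSets$, i.e. that $\hcF$ is a condensed set. I expect the only genuine point to be the coproduct-compatibility of $\res(\cF)$, and even that is immediate from the identification of coproducts in $\bSets$ with those in $\CH$ in Proposition \ref{BSetsCat}; everything else is a formal appeal to the equivalence already established, so I do not anticipate a substantive obstacle.
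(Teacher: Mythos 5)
Your argument is correct and is precisely the route the paper intends: the paper states this corollary without proof as an immediate consequence of the established equivalence, and your two steps (first, $\res(\cF)$ lands in $\wbSets_{\times}$ because coproducts in $\bSets$ coincide with those in $\CH$ by Proposition \ref{BSetsCat}; second, $\ex$ carries $\wbSets_{\times}$ into $\cSets$ by Proposition \ref{extensionverifiesstar} together with the coproduct computation in the proof of Corollary \ref{equivalencestar}) supply exactly the omitted details. No gaps.
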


If we have a contravariant functor $\cF:\CH\to \Sets$, we can restrict it to $\bSets$ and apply in this category the general construction of a functor preserving products, via the functor $\ ^+:\wbSets\to \wbSets_{\times}$ adjoint to the inclusion. It is a particular case of the plus construction for constructing the sheaf associated to a presheaf, but we only have to apply it once instead of twice. 

Given a set $S$, consider the following category $\cU_S$, whose objects $U$ are finite partitions of $S$, so $U=\{U_i\}_{i\in I}$ for $I$ a finite set, $U_i\subset S$, $U_i\ne \emptyset$ unless $S=\emptyset$, and such that $S=\bigsqcup_{i\in I} U_i$. Maps are given by refinements, so if $V=\{V_j\}_{j\in J}$ is another partition, a morphism $\psi: V\to U$ corresponds to a  map $\psi:J\to I$ such that for every $i\in I$ we have \[U_i=\bigsqcup_{j\in \psi^{-1}(i)} V_j\] (so there is at most one map, hence $\cU_S$ is a poset). Notice that we always have an inclusion $\iota_j:V_j\hookrightarrow U_{\psi(j)}$ for every $j\in J$. 

The category $\cU_S$ is cofiltered, as it is not empty, since $\{S\}$ is an object in $\cU_S$, and any two coverings $U=\{U_i\}_{i\in I}$ and $V=\{V_j\}_{j\in J}$ have a common refinement $W$, given by the intersections $W_{ij}=U_i\cap V_j$ (if $U_i\cap V_j \ne \emptyset$).  

For any functor $\cF$ in $\wbSets$, consider the contravariant functor $\cF_S:\cU_S\to \Sets$ given by $\cF_S(U)=\prod_{i\in I} \cF(\beta U_i)$ if $U=\{U_i\}_{i\in I}$, and if $\psi: V:=\{V_j\}_{j\in J}\to U$ is a morphism, the map $\cF_S(\psi):\cF_S(U)\to \cF_S(V)$ is given by \[\prod_{i\in I} \upsilon_i:\prod_{i\in I} \cF(\beta U_i)\to \prod_{i\in I} \prod_{j\in \psi^{-1}(i)} \cF(\beta V_j) \] the product of the maps  $\upsilon_i:\cF(\beta U_i)\to \prod_{j\in \psi^{-1}(i)} \cF(\beta V_j)$ given by the universal property from the maps  
$F(\iota_j):\cF(\beta U_{\psi(j)})\to \cF(\beta V_j)$ where $\iota_j:V_j\hookrightarrow U_{\psi(j)}$ is the inclusion. We define $\cF^+(S):=\colim \cF_S$. 

Notice that we have a map $\cF(\beta S)\to \cF^+(\beta S)$ as $S$ is one object of $\cU_S$, in fact a terminal object. In fact, two elements in $\cF(\beta S)$ give the same element in $\cF^+(\beta S)$  if there exists a covering $U=(U_i)_{i\in I}$ of $S$ and the elements have the same image by the map $\cF(S)\to \prod_{i\in I}\cF(U_i)$. 

In general, an element in $\cF^+(\beta S)$ is an element in  $\prod_{i\in I} \cF(\beta U_i)$ for some covering $U=(U_i)_{i\in I}$ of $S$; and, given two elements $s_1\in\cF_S(U)$ and $s_2\in \cF_S(V)$, for $U$ and $V$ objects in $\cU_S$, they are equal in $\cF^+(\beta S)$ if there is a common refinement  $\psi:W\to U$ and $\phi:W\to V$ such that  $\cF_S(\psi)(s_1)=\cF_S(\phi)(s_2)$.

One easily sees that the assignment can be promoted to a functor, as given any map $f:\cF\to \cG$ in $\wbSets$, and given any $S$ in $\Sets$ and a covering $U=(U_i)_{i\in I}$, we have maps $f(\beta U_i):\cF(\beta U_i)\to \cG(\beta U_i)$, and hence maps $ f(U): \cF_S(U)\to \cG_S(U)$ which commute with the restriction maps $\cF_S(\psi)$ for any $\psi:U\to V$ in $\cU_S$, so a map $f^+:\cF^+(\beta S)\to \cG^+(\beta S)$.

\begin{lem}
For any functor $\cF$ in $\wbSets$, the functor $\cF^+$ sends finite coproducts to finite products, so it is in fact in $\wbSets_{\times}$. 
\end{lem}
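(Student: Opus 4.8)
The plan is to verify the two defining conditions of $\wbSets_{\times}$ separately: that $\cF^{+}(\beta\emptyset)$ is a one--element set, and that for all $S,T$ in $\Sets$ the natural map
$$
\cF^{+}(\beta S\sqcup\beta T)\longrightarrow \cF^{+}(\beta S)\times\cF^{+}(\beta T)
$$
is a bijection. For the first, note that the empty partition (the family indexed by $I=\emptyset$) is an initial object of $\cU_{\emptyset}$: every object of $\cU_{\emptyset}$ has all its parts empty, so it receives a unique refinement map from it. Hence it is terminal in $\cU_{\emptyset}^{\op}$, and the defining colimit reduces to the value of $\cF_{\emptyset}$ there, namely the empty product $\prod_{i\in\emptyset}\cF(\beta U_{i})=\{\ast\}$. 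The whole content of the lemma thus lies in the binary case; writing $R:=S\sqcup T$ and using $\beta S\sqcup\beta T\cong\beta R$ from Proposition \ref{BSetsCat}, I must show $\cF^{+}(\beta R)\cong\cF^{+}(\beta S)\times\cF^{+}(\beta T)$. Throughout I use that $\cU_{S}$ is cofiltered, so each $\cF^{+}(\beta S)=\colim\cF_{S}$ is a filtered colimit.

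The key is a cofinality reduction. Call a partition of $R$ \emph{split} if every one of its parts lies entirely in $S$ or entirely in $T$. Any partition $U=\{U_{i}\}_{i\in I}$ of $R$ admits the split refinement obtained by keeping the nonempty members of $\{U_{i}\cap S\}_{i}\cup\{U_{i}\cap T\}_{i}$, and a common refinement of two split partitions is again split; therefore the full subcategory of split partitions is coinitial in $\cU_{R}$ (its opposite is cofinal in $\cU_{R}^{\op}$), and the colimit defining $\cF^{+}(\beta R)$ may be computed over it. Moreover, a split partition of $R$ is exactly a pair consisting of a partition of $S$ (its parts contained in $S$) together with a partition of $T$ (its parts contained in $T$), with refinements corresponding factorwise; this identifies the poset of split partitions with $\cU_{S}\times\cU_{T}$. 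Under this identification the restriction of $\cF_{R}$ becomes the external product $\cF_{S}\boxtimes\cF_{T}$, that is $(U',U'')\mapsto\cF_{S}(U')\times\cF_{T}(U'')$, because the product $\prod_{i}\cF(\beta U_{i})$ over all parts factors as the product over the parts inside $S$ times the product over the parts inside $T$.

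Combining these two observations gives
$$
\cF^{+}(\beta R)\;\cong\;\colim_{\cU_{S}^{\op}\times\cU_{T}^{\op}}\bigl(\cF_{S}\boxtimes\cF_{T}\bigr),
$$
and since $\cU_{S}^{\op}$ and $\cU_{T}^{\op}$ are filtered and filtered colimits in $\Sets$ commute with finite products (indeed with finite limits), the right--hand side equals $(\colim\cF_{S})\times(\colim\cF_{T})=\cF^{+}(\beta S)\times\cF^{+}(\beta T)$. Tracing the two projections back through the identification shows they are induced by forgetting the $T$-- respectively $S$--parts of a split partition, so the isomorphism is the natural comparison map. The step I would be most careful with is the cofinality claim combined with the commutation of the filtered colimit with the finite product; once the poset isomorphism between split partitions and $\cU_{S}\times\cU_{T}$ is established, the remaining verifications (the empty case and the identification of the comparison map) are routine.
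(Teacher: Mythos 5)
Your proof is correct, but the main step runs along a genuinely different track from the paper's. The paper reduces (via Proposition \ref{BSetsCat} and Lemma \ref{isobeta}) to a set $S$ written as a disjoint union $U_1\sqcup U_2$ of subsets, and then checks by a direct element chase that the comparison map $\rho:\cF^+(\beta S)\to\cF^+(\beta U_1)\times\cF^+(\beta U_2)$ is injective and surjective, merging a covering of $U_1$ and one of $U_2$ into a covering of $S$ and conversely. You instead pass to the coinitial sub-poset of split partitions of $S\sqcup T$, identify it with $\cU_S\times\cU_T$ carrying the functor $(U',U'')\mapsto\cF_S(U')\times\cF_T(U'')$, and finish by commuting the filtered colimit with the finite product. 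The mathematical content is the same --- the paper's injectivity and surjectivity checks are precisely the element-level shadow of your cofinality-plus-commutation argument --- but your version makes the structural mechanism explicit and extends verbatim to coproducts with any finite number of factors, at the price of invoking the cofinality theorem and the commutation of filtered colimits with finite limits in $\Sets$. One small point to patch: the identification of split partitions with $\cU_S\times\cU_T$ tacitly assumes $S$ and $T$ nonempty (if $S=\emptyset$, the split partitions of $S\sqcup T$ form a poset isomorphic to $\cU_T$, not to $\cU_{\emptyset}\times\cU_T$, since $\cU_{\emptyset}$ has two objects); this degenerate case should be dispatched separately, e.g.\ using your computation $\cF^+(\beta\emptyset)=\{\ast\}$. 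Your treatment of the empty coproduct itself agrees with the paper's.
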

\begin{pf} Observe first that $\cF^+(\emptyset)=\{*\}$, a final object in $\Sets$. Notice that the  $\cU_{\emptyset}$ has two objects, the trivial covering $\emptyset=\emptyset$, and the empty covering, that has no subsets. The empty covering is a refinement of the trivial covering. Now, the product over the empty set is a terminal set, and two elements of $\cF(\emptyset)$ become the same element on the empty covering.

Now, suppose that $\beta S=\beta T_1\sqcup \beta T_2\cong \beta(T_1\sqcup T_2)$, this last equality by Proposition \ref{BSetsCat}. But lemma \ref{isobeta} shows that there exists a bijection as sets $S\cong T_1\sqcup T_2$, hence there exists $U_1$ and $U_2$ disjoint subsets of $S$ such that $T_i\cong U_i$ for $i=1,2$, giving the bijection $S\cong T_1\sqcup T_2$. Hence, to show that  $\cF^+(\beta S)\cong \cF^+(\beta T_1)\times \cF^+(\beta T_2)$ it is sufficient to show it when $T_i$ are subsets of $S$ covering $S$ and the isomorphism is given by the equality.

Hence, consider now two disjoint subsets $U_1$ and $U_2$ of $S$ such that $S=U_1\sqcup U_2$. We want to show that the natural map
\[\rho:\cF^+(\beta S)\to \cF^+(\beta U_1)\times \cF^+(\beta U_2)\]
given by the maps $U_i\to U_1\sqcup U_2$, is a bijection.

Suppose first that $s_1,s_2\in \cF^+(S)$ goes to the same $s$ by $\rho$; as $\{U_i\}_{i\in \{1,2\}}$ is a covering of $S$, this means it appears in $\cU_{S}$. Now, being $s$ in $\cF^+(\beta U_i)$ it means there are coverings $\{U_{j,i}\}_{j\in I_j}$ of $\beta U_i$ such that $s$ goes to $(\sigma_{j,i})$ in $\prod_{i=1}^2 \prod_{j\in I_j} \cF(\beta U_{j,i})$. But $\{U_{j,i}\}_{j\in I_j,i\in \{1,2\}}$ is a covering in $\cU_S$, hence $s_1$ and $s_2$ are equal in $\cF^+(\beta S)$, hence they are equal. Therefore, $\rho$ is injective. 

Now, suppose we have $(s_1,s_2)\in  \cF^+(\beta U_1)\times \cF^+(\beta U_2)$. This means that  there are coverings $\{U_{j,i}\}_{j\in I_j}$  and we have $(\sigma_{j,i})$ in $\prod_{i=1}^2 \prod_{j\in I_j} \cF(\beta U_{j,i})$ such that $s_i\mapsto  (\sigma_{j,i})\in \prod_{j\in I_j} \cF(\beta U_{j,i})$ by the map that goes from the colimit to one of the objects of the colimit. But then $(\sigma_{j,i})$ determines an element $s\in \cF^+(\beta S)$. Clearly $\rho(s)=(s_1,s_2)$, hence $\rho$ is a bijection and $\cF^+$ is in $\wbSets_{\times}$.
\end{pf}

Finally, it is straightforward to prove that the given construction is the left adjoint to the inclusion $\wbSets_{\times}\to \wbSets$.

\begin{cor}
    The composition of the functors \[\wCH\overset{\res}{\to}\wbSets\overset{\ ^+}{\to}\wbSets_{\times}\overset{\ex}{\to} \cSets\]
    is isomorphic to the $\sharp$-construction associating a sheaf to a presheaf. 
\end{cor}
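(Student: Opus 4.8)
The plan is to show that the composite $G := \ex \circ {}^{+} \circ \res$ is left adjoint to the inclusion $\iota : \cSets \hookrightarrow \wCH$. Since the $\sharp$-construction is by definition the (up to isomorphism unique) left adjoint of $\iota$, uniqueness of adjoints will then give $G \cong {}^{\sharp}$. Concretely, I would produce, for every $\cF \in \wCH$ and every condensed set $\cG \in \cSets$, a chain of bijections natural in both variables
$$\Hom_{\cSets}(G(\cF), \cG) \cong \Hom_{\wbSets_{\times}}({}^{+}\res\cF, \res\cG) \cong \Hom_{\wbSets}(\res\cF, \res\cG) \cong \Hom_{\wCH}(\cF, \cG).$$

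The first bijection comes from Corollary \ref{equivalencestar}: there $\ex : \wbSets_{\times} \to \cSets$ is the quasi-inverse of $\res$, hence an equivalence with $\ex\res \cong \id_{\cSets}$. Writing $\cG \cong \ex(\res\cG)$ and using full faithfulness, $\Hom_{\cSets}(\ex({}^{+}\res\cF), \ex(\res\cG)) \cong \Hom_{\wbSets_{\times}}({}^{+}\res\cF, \res\cG)$; note $\res\cG$ indeed lies in $\wbSets_{\times} = \ncSets$ because $\cG$ sends finite coproducts to products and $\cG(\emptyset)=\{*\}$. The second bijection is the adjunction ${}^{+} \dashv (\wbSets_{\times} \hookrightarrow \wbSets)$ stated just before the corollary, applied with $\res\cG \in \wbSets_{\times}$.

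The third, and main, bijection is the assertion that restriction induces $\Hom_{\wbSets}(\res\cF, \res\cG) \cong \Hom_{\wCH}(\cF, \cG)$ whenever $\cG$ satisfies $(\star)$. I would obtain this by exhibiting $\ex \circ \res$ as the reflector onto the reflective subcategory $\wCH_{\star} \subseteq \wCH$. The unit is the natural transformation $u_{\cF} : \cF \to \ex\res\cF = \hcF$ whose component at $X$ is the unique factorization of $\cF(\xi_X) : \cF(X) \to \cF(B(X))$ through the equalizer $\hcF(X) = \eq(\cF(B(X)) \rightrightarrows \cF(\BB(X)))$; this factorization exists because $\xi_X \circ \pi_1 = \xi_X \circ \pi_2$ in the standard free resolution of Lemma \ref{freeresolution}, and naturality in $X$ follows from Corollary \ref{mapfreeresolutions}. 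Proposition \ref{extensionverifiesstar} shows $\ex\res\cF \in \wCH_{\star}$ for every $\cF$, while Proposition \ref{resex} shows $u_{\cG}$ is an isomorphism when $\cG \in \wCH_{\star}$. These two facts are exactly what makes $\wCH_{\star}$ reflective with reflector $\ex\res$, so $\Hom_{\wCH}(\cF, \cG) \cong \Hom_{\wCH_{\star}}(\ex\res\cF, \cG)$; composing with the equivalence $\res : \wCH_{\star} \to \wbSets$ of Theorem \ref{equivalence} (and $\res\ex \cong \id$) yields the desired bijection.

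Finally, I would check that all three bijections are natural in $\cF$ and $\cG$ — each is a hom-bijection coming from an equivalence or an adjunction unit/counit, so naturality is automatic — and conclude $G \dashv \iota$. Uniqueness of left adjoints then gives $\ex \circ {}^{+} \circ \res \cong {}^{\sharp}$. I expect the only genuine work to be the third bijection, namely verifying the reflective-subcategory structure: constructing the unit $u$ and checking its universal property. The first two bijections are formal consequences of the already-established equivalence and adjunction, and the passage from $\wCH_{\star}$ to the full subcategory $\cSets$ is handled by the $(\cdot)^{+}$ step, which is precisely what forces the product-preservation needed to land in $\cSets$ rather than merely $\wCH_{\star}$.
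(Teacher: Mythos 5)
The paper states this corollary without proof (it only remarks beforehand that ${}^{+}$ is left adjoint to the inclusion $\wbSets_{\times}\to\wbSets$), so there is no written argument to compare against; your route — showing $\ex\circ{}^{+}\circ\res$ is left adjoint to the inclusion $\cSets\hookrightarrow\wCH$ and invoking uniqueness of left adjoints — is the natural one and is essentially correct. The first two hom-bijections are indeed formal consequences of Corollary \ref{equivalencestar} and of the adjunction for ${}^{+}$. The one place where you are too quick is the assertion that Propositions \ref{extensionverifiesstar} and \ref{resex} are ``exactly what makes $\wCH_{\star}$ reflective with reflector $\ex\res$.'' Those two facts give \emph{existence} of a factorization of any $f:\cF\to\cG$ with $\cG\in\wCH_{\star}$ through the unit $u_{\cF}$, namely $u_{\cG}^{-1}\circ\ex\res(f)$, but \emph{uniqueness} of the factorization is not automatic from a pointed endofunctor whose values lie in the subcategory and whose unit is invertible there. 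You need one further input: either that $\res(u_{\cF})$ is an isomorphism — which is Proposition \ref{exres} applied to $\cF$ itself, since the component of $u_{\cF}$ at $\beta S$ is precisely the bijection $\cF(\beta S)\cong\hcF(\beta S)$, so that $\ex\res(u_{\cF})$ is invertible and two factorizations must agree — or, more directly, that a natural transformation into a functor satisfying $(\star)$ is determined by its restriction to $\bSets$, because $\cG(\xi_X):\cG(X)\to\cG(B(X))$ is injective by Proposition \ref{resex} and naturality forces the component at $X$ from the component at $B(X)$. With that supplement the reflective-subcategory argument closes, all three bijections are natural, and the corollary follows as you describe.
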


\end{document}